\theoremstyle{plain}
\newtheorem{theorem}{Theorem}[section]
\newtheorem{lemma}[theorem]{Lemma}
\newtheorem{proposition}[theorem]{Proposition}
\theoremstyle{remark}
\newtheorem{remark}[theorem]{Remark}
\begin{document}
\allowdisplaybreaks[4]
\numberwithin{figure}{section}
\numberwithin{table}{section}
 \numberwithin{equation}{section}
% \numberwithin{figure}{section}
%
\title[$C^0$ IP  Method for Optimal Control Plate Problem]
 {A Frame Work for the Error Analysis of Discontinuous Finite Element Methods for Elliptic Optimal Control Problems and Applications to $C^0$ IP methods}

\author[S. Chowdhury]{Sudipto Chowdhury}

\address{Department of Mathematics, Indian Institute of Science, Bangalore - 560012}
\email{sudipto10@math.iisc.ernet.in}

\author[T. Gudi]{Thirupathi Gudi}

\address{Department of Mathematics, Indian Institute of Science, Bangalore - 560012}
\email{gudi@math.iisc.ernet.in}
\author[A. K. Nandakumaran]{A. K. Nandakumaran}

\address{Department of Mathematics, Indian Institute of Science, Bangalore - 560012}
\email{nands@math.iisc.ernet.in}

\date{}
\begin{abstract}
In this article, an abstract framework for the error analysis of
discontinuous Galerkin methods for control constrained optimal
control problems is developed. The analysis establishes the best
approximation result from  a priori analysis point of view and
delivers reliable and efficient a posteriori error estimators. The
results are applicable to a variety of problems just under the
minimal regularity possessed by the well-posed ness of the
problem. Subsequently, applications of $C^0$ interior penalty
methods for a boundary control problem as well as a distributed
control problem governed by the biharmonic equation subject to
simply supported boundary conditions are discussed through the
abstract analysis. Numerical experiments illustrate the
theoretical findings. Finally, we also discuss the variational
discontinuous discretization method (without discretizing the
control) and its corresponding error estimates.
\end{abstract}
\keywords{optimal control, finite element, discontinuous Galerkin,
error bounds, $C^0$IP method, simply supported plate, biharmonic}
\subjclass{65N30, 65N15}
\maketitle
%%%%%%%%%%%%%%%%%%%%%%%%%%%%%%%%%%%%%%%%%%%%%%%%%%%%%%%%%%%%%%%%
\def \R{{{\Bbb R}}}
\def \P{{{\rm {\!}\cal P}}}
\def \Z{{{\rm {\!}\cal Z}}}
\def \Q{{{\rm {\!}\cal Q}}}
\allowdisplaybreaks
\def\d{\displaystyle}
\def\R{\mathbb{R}}
\def\cA{\mathcal{A}}
\def\cB{\mathcal{B}}
\def\p{\partial}
\def\O{\Omega}
\def\bbP{\mathbb{P}}
\def\bb{{\bf b}}
\def\cV{\mathcal{V}}
\def\cM{\mathcal{M}}
\def\cT{\mathcal{T}}
\def\cE{\mathcal{E}}
\def\bbE{\mathbb{E}}
\def\ssT{{\scriptscriptstyle T}}
\def\HT{{H^2(\O,\cT_h)}}
\def\mean#1{\left\{\hskip -5pt\left\{#1\right\}\hskip -5pt\right\}}
\def\jump#1{\left[\hskip -3.5pt\left[#1\right]\hskip -3.5pt\right]}
\def\smean#1{\{\hskip -3pt\{#1\}\hskip -3pt\}}
\def\sjump#1{[\hskip -1.5pt[#1]\hskip -1.5pt]}
\def\jumptwo{\jump{\frac{\p^2 u_h}{\p n^2}}}
%%%%%%%%%%%%%%%%%%%%%%%%%%%%%%%%%%%%%%%%%%%%%%%%%%%%%%%%%%%%%%%%%%%%%%%%%

\section{Introduction}\label{sec:Introduction}

The optimal control problems have been playing a very important
role in the modern scientific world. The numerical analysis for
these class of problems dates back to 1970's
\cite{Falk:1973:Control,Gevec:1979:Control}. There are many
landmark results on the finite element analysis of optimal control
problems. It is difficult to cite all the articles here but the
relevant work can be found in the references of some of the
articles that we discuss here. We refer to the monograph
\cite{trolzstch:2005:Book} for the theory of optimal control
problems and the aspects on the respective numerical algorithms.
Therein, the primal-dual active set strategy algorithm developed
in \cite{HIK:2003:PrimalDual} is also discussed in the context of
the optimal control problems. We refer to
\cite{meyerrosch:2004:Optiaml} for a super-convergence result for
a post-processed control for constrained control problems. A
variational discretization method is introduced in
\cite{Hinze:2005:Control} to derive optimal error estimates by
exploiting the relation between the control and the adjoint state.
For the Neumann boundary control problem with graded mesh
refinement refer to \cite{AJR:2012:Boundarycontrol} and for the
Dirichlet boundary control problems refer to
\cite{CR:2013:Dirich,Deckelnick:2009:Control,Gunzberger:1991:Stokes,MRV:2013:Dirich,Steinbach:2010:BEM}
and references there in. There has been also a lot of interest on
the state constrained control problems, for example refer to
\cite{Deckelnick:2007:Control,Ortner:2011:Control,Hinze:2011:Control}
and references therein. In the context of a posteriori error
analysis of control constrained problems, we refer to
\cite{Hoppe:2008:Control}. A general framework for a posteriori
error analysis of conforming finite element methods for optimal
control problems with constraints on controls is derived in
\cite{KRS:2014:AdaptiveControl} recently. The result therein is
obtained by the help of corresponding linear problems. In the
context of higher order problems, recently in
\cite{cao:2009:mixedBH,wollner:2012:MixedBH}, mixed finite element
methods have been proposed and analyzed for a distributed control
problem governed by the biharmonic equation subject to the
Dirichlet boundary conditions while a $C^0$ interior penalty
method is analyzed in \cite{Gudi:2014:Control} for the clamped
plate control problem.

There are not many results on the analysis of discontinuous
Galerkin (DG) methods for  optimal control problems, in particular
for higher order problems since DG methods are very attractive for
them. In this article, we develop an abstract framework for the
error analysis of discontinuous finite element methods applied to
control constrained optimal control problems. The outcome of the
result is a best approximation result for the method and a
reliable and efficient error estimator. It is important to note
that these best approximation results are key estimates in
establishing the optimality of adaptive finite element methods,
see for example \cite{Carsten:2013:CR,Hu:2012:Morley}. Also it is
worth noting that the standard error analysis of DG methods
require additional regularity which do not exist in several cases
for example in simply supported plates or mixed boundary value
problems e.g., see the discussions in
\cite{Gudi:2010:NewAnalysis,BNeilan:2010:C0IPSingular,GN:2011:Sixth,BGGS:2012:CHC0IP}.
Therefore the error  analysis of DG methods has to be treated
carefully. To this end, we introduce two auxiliary problems one is
dealt with a projection in a priori analysis and the other one is
based on a reconstruction in a posteriori analysis. Subsequently
Theorem \ref{thm:Err-q} and Theorem \ref{thm:Err-up-Apost} are
proved which play important role in the analysis. We believe that
the results in this article presents a framework for the error
analysis of discontinuous finite element methods for control
problems with limited regularity. Moreover the a posteriori error
estimator is useful in adaptive mesh refinement algorithms.

On the other hand, $C^0$ interior penalty methods became very
attractive in the recent past for approximating the solutions of
higher order problems
\cite{BGS:2010:AC0IP,BNeilan:2010:C0IPSingular,BSung:2005:DG4,BGGS:2012:CHC0IP,GN:2011:Sixth,EGHLMT:2002:DG3D}.
This is due to the fact that the conforming and mixed methods are
complicated and the nonconforming methods do not come in a natural
hierarchy. In this article, we propose and analyze a $C^0$
interior penalty method for optimal control problems (both
distributed and Neumann boundary control) governed by the
biharmonic equation subject to simply supported boundary
conditions. Note that the analysis of Dirichlet boundary control
problems in general is a subtle issue since the arguments for that
particular problem needs to be addressed using a very weak
formulation or an equivalent one, e.g., see
\cite{CR:2013:Dirich,MRV:2013:Dirich}. The analysis in this
article differs from the one in \cite{Gudi:2014:Control} and in
particular an abstract frame work for obtaining energy norm
estimates and in a posteriori error analysis. Also we analyze here
the boundary control problems. The variational discretization
method introduced in \cite{Hinze:2005:Control} is also discussed
in the context of discontinuous Galerkin methods using the
framework we developed here. Recently, it is shown in
\cite{Kanshat:2014:Stokes} that the $C^0$ interior penalty
solution of the biharmonic problem has connection to the
divergence-conforming solution of the Stokes problem. Therefore
our results will also be useful in the context of control problems
for Stokes equation.

%Moreover there are efficient and optimal multigrid
%solvers available for these methods
%\cite{BSung:2006:MultiGridC0IP,BW:2005:SchwarzC0IP}.

The rest of the article is organized as follows. Section
\ref{sec:Model-Problem} introduces the model problems that are
under discussion. In Section \ref{sec:Abstract}, we set up the
abstract framework for the error analysis of discontinuous finite
element methods and derive therein some abstract error estimates
that form the basis for a priori and a posteriori error analysis.
In Section \ref{sec:Discrete-Problem}, we develop the discrete
setting and discuss the applications to the model problems
introduced in Section \ref{sec:Model-Problem}. In Section
\ref{sec:Numerics}, we present some numerical examples to
illustrate the theoretical results. In Section \ref{sec:VDM}, we
introduce the variational discretization method and sketch the
proofs for obtaining error estimates using the frame developed in
Section \ref{sec:Abstract}. Finally we conclude the article in
Section \ref{sec:conlusions}.

\section{Model Problems}\label{sec:Model-Problem}
The discussion will be on two model problems arising from the
optimal control of simply supported plate problem, one is on the
distributed control problem and the another is on the boundary
control problem. However the abstract analysis that we develop
later in the forthcoming section does not limit to only these
problems. In what follows subsequently, we introduce the common
data shared by the two model problems.

Let $\Omega\subset \R^2$ be a bounded domain with polygonal
boundary $\Gamma$. Assume that there is some $m\geq 1$ such that
the boundary $\Gamma$ is the union of some line segments
$\Gamma_i$'s $(1\leq i\leq m)$ whose interior in the induced
topology are pair-wise disjoint.  Let the admissible space
$V:=H^2(\Omega)\cap H^1_0(\Omega)$. Denote the $L_2(\Omega)$ and
$L_2(\Gamma)$ inner-products by $(\cdot,\cdot)$ and
$\langle\cdot,\cdot\rangle$, respectively. Let $f\in
H^{-1}(\Omega)$, $u_d\in L_2(\Omega)$ and  a real number
$\alpha>0$ are given. Define the bilinear form $a:V\times
V\rightarrow R$ by
\begin{align}\label{eq:def-a}
a(w,v)=(D^2 w, D^2v),
\end{align}
where $D^2w=[w_{x_ix_j}]_{1\leq i,j,\leq 2}$ is the standard
Hessian of $w$.

\begin{remark}
We may assume that the load function $f\in V^*$, the dual of $V$,
in that the case the numerical method will have to be modified.
The analysis in such cases can be handled as in
\cite{BCGG:2014:IMA}.
\end{remark}

\smallskip
\noindent {\bf Model Problem 1.} Define the quadratic functional
$J:V\times L_2(\Gamma)\rightarrow R$ by
\begin{align}\label{eq:def-J1}
J(w,p)=\frac{1}{2}\|w-u_d\|_{L_2(\Omega)}^2+\frac{\alpha}{2}\|p\|_{L_2(\Gamma)}^2,
\quad w\in V,\;p\in L_2(\Gamma).
\end{align}
\smallskip
\noindent For given $\underline q,\,\overline q \in \R\cup\{\pm
\infty\}$ with $\underline q<\overline q$, define the admissible
set of controls by
$$Q_b=\{p\in L_2(\Gamma):\underline q \leq p(x)
\leq \overline q\text{ for
 a.e. } x\in\Gamma\}.$$
\par
\noindent Consider the optimal control problem of finding $u\in V$
and $q\in Q_{b}$ such that
\begin{align}\label{eq:min-J1}
J(u,q)=\min_{w\in V,p\in Q_{b}}J(w,p),
\end{align}
subject to the condition that $w\in V$ satisfies
\begin{align}
a(w,v)=f(v)+\langle p,\partial v/\partial n\rangle \quad \forall
v\in V. \label{eq:MP-w1}
\end{align}
Note that the optimal solution $(u,q)\in V\times Q_{b}$, whenever
exists, satisfies
\begin{align}
a(u,v)=f(v)+\langle q,\partial v/\partial n\rangle \quad \forall
v\in V.\quad\label{eq:MP-u1}
\end{align}

\smallskip

In order to establish the existence of a solution to
\eqref{eq:min-J1}, note that the model problem \eqref{eq:MP-w1} has a unique solution $w\in V$
for given $p\in L_2(\Gamma)$.  Define this correspondence as $Sp=w$. From the stability estimates of the
solution $w$, it is easy to check that $S: L_2(\Gamma)\rightarrow
L_2(\Omega)$ defines a continuous linear operator. Using the
operator $S$, the minimization problem \eqref{eq:min-J1} can be
written in the reduced form of finding $q\in L_2(\Gamma)$ such
that
\begin{align}\label{eq:min-j1}
j(q)=\min_{p\in Q_{b}}j(p),
\end{align}
where
\begin{align}\label{eq:def-j1}
j(p)=\frac{1}{2}\|Sp-u_d\|_{L_2(\Omega)}^2+\frac{\alpha}{2}\|p\|_{L_2(\Gamma)}^2.
\end{align}
\par
\noindent
Using the theory of elliptic optimal control problems
\cite{trolzstch:2005:Book}, the following proposition on the existence and uniqueness of the solution can be proved and the optimality condition can be derived.

\begin{proposition}
The control problem $\eqref{eq:min-j1}$ has a
unique solution $q$ and the corresponding solution $u=Sq$ of
$\eqref{eq:MP-u1}$. Furthermore, by introducing the adjoint state $\phi\in V$ such
that
\begin{align}
a(v, \phi)=( u-u_d,v)\quad \forall v\in V,\quad\label{eq:MP-p1}
\end{align}
the optimality condition that $j\,'(q)(p-q)\geq 0$, $\forall p\in Q_b$, can be
expressed as
\begin{align}\label{eq:MP-q1}
\langle \partial \phi/\partial n+\alpha  q,p- q\rangle \geq 0\quad
\forall p\in Q_{b}.
\end{align}
\end{proposition}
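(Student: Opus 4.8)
The plan is to treat the reduced problem \eqref{eq:min-j1} by the direct method of the calculus of variations. First I would record that $j$ is continuous, strictly convex and coercive on $L_2(\Gamma)$: the term $\frac{\alpha}{2}\|p\|_{L_2(\Gamma)}^2$ is strictly convex and, since $\alpha>0$, forces $j(p)\to\infty$ as $\|p\|_{L_2(\Gamma)}\to\infty$, while $p\mapsto\frac12\|Sp-u_d\|_{L_2(\Omega)}^2$ is convex because $S$ is affine and continuous. As $Q_b$ is a nonempty, closed and convex subset of the Hilbert space $L_2(\Gamma)$, coercivity bounds any minimizing sequence, weak lower semicontinuity (convex plus continuous) passes to a weak limit that remains in $Q_b$, and strict convexity excludes a second minimizer. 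This gives a unique minimizer $q\in Q_b$, and $u=Sq$ is by definition the state solving \eqref{eq:MP-u1}.

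Next I would derive the first-order condition. Since $j$ is convex and G\^ateaux differentiable and $q$ minimizes $j$ over the convex set $Q_b$, the minimizer is characterized by the variational inequality $j'(q)(p-q)\ge 0$ for all $p\in Q_b$. To make this explicit, set $z:=Sp-Sq\in V$; subtracting the state equations for $Sp$ and $Sq$ shows that $z$ solves the homogeneous-load problem $a(z,v)=\langle p-q,\partial v/\partial n\rangle$ for all $v\in V$. A direct expansion of the two quadratic terms of $j$ along the segment $q+t(p-q)$ then gives $j'(q)(p-q)=(u-u_d,z)+\alpha\langle q,p-q\rangle$, using $u=Sq$.

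The heart of the proof is to replace the interior term $(u-u_d,z)$ by a boundary term, which is exactly the role of the adjoint state $\phi$ of \eqref{eq:MP-p1}. Testing \eqref{eq:MP-p1} with $v=z$ gives $a(z,\phi)=(u-u_d,z)$, while testing the equation for $z$ with $v=\phi$ gives $a(z,\phi)=\langle p-q,\partial\phi/\partial n\rangle$; equating the two identities yields $(u-u_d,z)=\langle\partial\phi/\partial n,\,p-q\rangle$. Substituting back collapses $j'(q)(p-q)$ to $\langle\partial\phi/\partial n+\alpha q,\,p-q\rangle$, so the variational inequality $j'(q)(p-q)\ge 0$ for all $p\in Q_b$ becomes precisely \eqref{eq:MP-q1}.

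The one delicate point, and the reason \eqref{eq:MP-p1} is written with $\phi$ in the second argument of $a$, is the admissibility of these two test-function substitutions under the limited regularity at hand: for the simply supported plate one only controls $\phi,z\in V=H^2(\Omega)\cap H^1_0(\Omega)$, so I would verify that the normal-derivative trace $\partial\phi/\partial n$ is a well-defined element of $L_2(\Gamma)$ (via the trace theorem on $H^2(\Omega)$), which is what makes the pairing $\langle\partial\phi/\partial n,\,p-q\rangle$ and the definition of $S$ meaningful. Placing $\phi$ in the second slot lets the two substitutions meet at $a(z,\phi)$ without invoking any symmetry of $a$, so the argument needs nothing beyond the $H^2$ well-posedness already assumed. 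I expect existence, uniqueness and the computation of $j'$ to be routine, with this adjoint identity the only substantive step.
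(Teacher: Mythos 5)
Your proof is correct and is precisely the standard argument that the paper invokes by citation (it offers no proof of its own, deferring to the theory of elliptic optimal control problems in the cited monograph): direct method plus strict convexity for existence and uniqueness, followed by the adjoint-state identity $a(z,\phi)=(u-u_d,z)=\langle\partial\phi/\partial n,\,p-q\rangle$ to rewrite $j\,'(q)(p-q)$ as \eqref{eq:MP-q1}. Your remarks on the affine character of $S$, on placing $\phi$ in the second argument of $a$ so that no symmetry is needed, and on the $H^2$ trace making $\partial\phi/\partial n\in L_2(\Gamma)$ are exactly the points that make the argument go through at the stated regularity.
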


\medskip \noindent {\bf Model Problem 2.} In this example, we
consider  the model of a distributed control problem. For
this, define the quadratic functional $J:V\times
L_2(\Omega)\rightarrow R$ by
\begin{align}\label{eq:def-J2}
J(w,p)=\frac{1}{2}\|w-u_d\|_{L_2(\Omega)}^2+\frac{\alpha}{2}\|p\|_{L_2(\Omega)}^2,
\quad w\in V,\;p\in L_2(\Omega).
\end{align}
\smallskip
\noindent Let $\underline q,\,\overline q \in \R$ with $\underline
q<\overline q$ be given.  Define $Q_d=\{p\in
L_2(\Omega):\underline q \leq p(x) \leq \overline q\text{ for
 a.e.} x\in\Omega\}$.
 The distributed control problem consists of finding $u\in V$
and $q\in Q_{d}$ such that
\begin{align}\label{eq:min-J2}
J(u,q)=\min_{w\in V,p\in Q_{d}}J(w,p),
\end{align}
where $w\in V$ satisfies
\begin{align}
a(w,v)=f(v)+(p, v)\quad \forall v\in V. \label{eq:MP-w2}
\end{align}
It is clear that whenever it exists the optimal solution $(u,q)\in
V\times Q_{d}$ satisfies
\begin{align}
a(u,v)=f(v)+(q,v) \quad \forall v\in V.\quad\label{eq:MP-u2}
\end{align}

\smallskip
Note that the model problem \eqref{eq:MP-w2} has a unique solution
$w\in H^{2}(\Omega)$ for given $p\in L_2(\Omega)$. Setting this
correspondence as $Sp=w$ and using the stability estimates of $w$,
it is obvious that $S: L_2(\Omega)\rightarrow L_2(\Omega)$ defines
a continuous linear operator. Then, the minimization problem
\eqref{eq:min-J2} is reduced to find $q\in L_2(\Omega)$ such that
\begin{align}\label{eq:min-j2}
j(q)=\min_{p\in Q_{d}}j(p),
\end{align}
where
\begin{align}\label{eq:def-j2}
j(p)=\frac{1}{2}\|Sp-u_d\|_{L_2(\Omega)}^2+\frac{\alpha}{2}\|p\|_{L_2(\Omega)}^2.
\end{align}
Again the theory of elliptic optimal control problems
\cite{trolzstch:2005:Book} implies that the problem
\eqref{eq:min-j2} has a unique solution $q$. The corresponding
solution of \eqref{eq:MP-u2} is denoted by $ u$. Moreover as in
the earlier case, there exists an adjoint state $\phi\in V$ such
that
\begin{align}
a(v, \phi)=( u-u_d,v)\quad \forall v\in V,\quad\label{eq:MP-p2}
\end{align}
and
\begin{align}\label{eq:MP-q2}
(\phi+\alpha  q,p- q ) \geq 0\quad \forall p\in Q_{d}.
\end{align}

\begin{remark}
The quadratic functionals $J$ in \eqref{eq:def-J1} or
\eqref{eq:min-J2} may consists of $\|w-u_d\|_{H^k(\Omega)}$ for
$k=1$ or $2$. The analysis in the forthcoming section can be
easily extended to these cases as well.
\end{remark}

\section{Abstract Setting and Analysis}\label{sec:Abstract}
In this section, we develop an abstract framework for the error
analysis of discontinuous and nonconforming methods for
approximating the solutions of optimal control problems with
either boundary control or the distributed control. All the vector spaces introduced below
are assumed to be real.

\par
Let $V$ be a Sobolev-Hilbert space with the norm $\|\cdot\|_V$ and
with dual denoted by $V'$. The space $V$ will be an admissible
space for state and adjoint state variables. Let $W$ be a Hilbert
space such that $V\subset W\subset V'$ (Gelfand triplet) and the
inclusion is continuous. The inner product and norm on $W$ is
denoted by $(\cdot,\cdot)$ and $\|\cdot\|_W$, respectively.
Let $Q$ be an Hilbert space that will be used for seeking the control variable.
The norm and the inner-product on $Q$ will be denoted by
$\|\cdot\|_Q$  and $\langle\cdot,\cdot\rangle$ respectively. Let
$B:V\rightarrow Q$ be a linear and continuous operator. Let
$Q_{ad}\subset Q$ be a nonempty, closed and convex subset.

\par
Assume that $(u,\phi,q)\in V\times V\times Q_{ad}$ solve the
system
\begin{align}
a(u,v)&=f(v)+\langle q,Bv\rangle \quad \forall v\in V,\quad\label{eq:AMP-u}\\
a(v,\phi)&=(u-u_d,v)\quad \forall v\in V,\quad\label{eq:AMP-p}\\
\langle B\phi+\alpha q, p- q\rangle &\geq 0\quad \forall p\in
Q_{ad}, \label{eq:AMP-q}
\end{align}
where $f \in V'$, $u_d\in W$, $\alpha>0$ are given and $a:V\times V
\rightarrow R$ is a continuous and elliptic bilinear form in
the sense that there exist positive constants $C$ and $c$ such
that
\begin{align*}
|a(u,v)|&\leq C \|u\|_V\;\|v\|_V\quad \forall u,\,v\in V,\\
a(v,v)&\geq c\|v\|_V^2\quad \forall v\in V.
\end{align*}
\smallskip
\noindent Next we introduce the corresponding discrete setting.
Let $V_h\subset W$ be a finite dimensional (finite element)
subspace and there is a norm $\|\cdot\|_h$ on $V_h+V$ such that
$\|v\|_h=\|v\|_V$ for all $v\in V$. Let $a_h:V_h\times
V_h\rightarrow R$ be a continuous and elliptic bilinear form, i.e,
there exist positive constants $\tilde C$ and $\tilde c$ such that
\begin{align*}
|a_h(u_h,v_h)|&\leq \tilde C \|u_h\|_h\;\|v\|_h\quad \forall u_h,\,v_h\in V_h,\\
a_h(v_h,v_h)&\geq \tilde c \|v_h\|_h^2\quad \forall v_h\in V_h.
\end{align*}
Similarly assume that $Q_h\subset Q$ be a finite dimensional
(finite element) subspace and $Q_{ad}^h\subset Q_{ad}$ be a
nonempty, closed and convex subset of $Q_h$. Further assume that
$f\in V_h'$.

\par
Suppose that the discrete variables $(u_h,\phi_h,q_h)\in V_h\times
V_h\times Q_{ad}^h$ solve the system
\begin{align}
a_h(u_h,v_h)&=f(v_h)+\langle q_h,B_hv_h\rangle \quad \forall v_h\in V_h,\quad\label{eq:ADP-u}\\
a_h(v_h,\phi_h)&=(u_h-u_d,v_h)\quad \forall v_h\in V_h,\quad\label{eq:ADP-p}\\
\langle B_h\phi_h+\alpha q_h, p_h- q_h\rangle &\geq 0\quad \forall
p_h\in Q_{ad}^h, \label{eq:ADP-q}
\end{align}
where $B_h:V_h\rightarrow Q_h$ is discrete counterpart of $B$ such
that $B_hv=Bv$ for all $v\in V$.

Throughout this section, we assume that the following hold true:
\par
\smallskip
\noindent {\bf Assumption (P-T):} There hold
\begin{align}
\|v\|_W &\leq C \|v\|_h \quad \forall v\in V+V_h,\label{eq:Poincare-type}\\
\|B_h(v-v_h)\|_Q &\leq  C \|v-v_h\|_h\quad \text{ for } v\in
V,\;v_h\in V_h.\label{eq:trace-QV}
\end{align}
As it will be seen in subsequent sections that
\eqref{eq:Poincare-type} corresponds to a Poincar\'e type
inequality and \eqref{eq:trace-QV} corresponds to a trace
inequality on broken Sobolev spaces.

We need the $Q$-projection defined by the following: For given
$q\in Q$, let $\Pi_h q\in Q_h$ be the solution of
\begin{align}
\langle \Pi_h q-q, p_h\rangle=0 \quad \forall p_h\in
Q_h.\quad\label{eq:Piq}
\end{align}
\par
\smallskip
\noindent {\bf Assumption (Q):} Assume that $\Pi_hq\in Q_{ad}^h$
whenever $q\in Q_{ad}$.

\par
\smallskip
We turn to derive some abstract a priori error analysis. To this
end, we introduce some projections as follows: Let $P_hu\in V_h$
and $P_h\phi\in V_h$ solves
\begin{align}
a_h(P_hu,v_h)&=f(v_h)+\langle q,B_hv_h\rangle \quad \forall v_h\in V_h,\quad\label{eq:Phu}\\
a_h(v_h, P_h\phi)&=(u-u_d,v_h)\quad \forall v_h\in
V_h,\quad\label{eq:Php}
\end{align}
respectively.

\par
\smallskip
\noindent The following lemma is a key in the error analysis.

\begin{lemma}\label{lem:Err-q-abstract} There hold
\begin{align*}
\langle B_h(\phi_h-P_h\phi), q-q_h\rangle &\geq \alpha
\|q-q_h\|_Q^2+\langle B_h\phi_h+\alpha q_h, q-p_h\rangle \\
&\quad+\langle B_h(\phi-P_h\phi), q-q_h\rangle \quad \forall
p_h\in Q_{ad}^h.
\end{align*}
\end{lemma}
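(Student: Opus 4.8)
The plan is to combine the continuous optimality condition \eqref{eq:AMP-q} and the discrete optimality condition \eqref{eq:ADP-q} through an exact algebraic identity organized around $\alpha\|q-q_h\|_Q^2$, and then to insert the projection $P_h\phi$ at the very end.

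First I would record two facts that make the two variational inequalities compatible. Since $\phi\in V$ and $B_hv=Bv$ for all $v\in V$, we have $B_h\phi=B\phi$, so \eqref{eq:AMP-q} may be read with $B_h$ in place of $B$. Moreover, because $Q_{ad}^h\subset Q_{ad}$, the discrete control $q_h$ is an admissible test function in \eqref{eq:AMP-q}; choosing $p=q_h$ there gives the continuous ingredient
\begin{align*}
\langle B_h\phi+\alpha q,\,q-q_h\rangle\leq 0.
\end{align*}
The discrete ingredient is \eqref{eq:ADP-q} itself: for any $p_h\in Q_{ad}^h$, $\langle B_h\phi_h+\alpha q_h,\,p_h-q_h\rangle\geq 0$.

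The heart of the argument is the identity
\begin{align*}
\alpha\|q-q_h\|_Q^2=\langle B_h\phi+\alpha q,\,q-q_h\rangle-\langle B_h\phi_h+\alpha q_h,\,q-q_h\rangle+\langle B_h(\phi_h-\phi),\,q-q_h\rangle,
\end{align*}
which holds because the two $B_h$ contributions cancel and the control terms collapse to $\langle\alpha(q-q_h),\,q-q_h\rangle$. Into this I would feed the two inequalities above. The first bracket is $\leq 0$ and may be dropped. For the second bracket I split $q-q_h=(q-p_h)+(p_h-q_h)$; the $(p_h-q_h)$ part satisfies $-\langle B_h\phi_h+\alpha q_h,\,p_h-q_h\rangle\leq 0$ by the discrete inequality, leaving $-\langle B_h\phi_h+\alpha q_h,\,q-p_h\rangle$. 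Collecting the surviving terms yields
\begin{align*}
\langle B_h(\phi_h-\phi),\,q-q_h\rangle\geq\alpha\|q-q_h\|_Q^2+\langle B_h\phi_h+\alpha q_h,\,q-p_h\rangle.
\end{align*}

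Finally I would introduce the projection by writing $\phi_h-\phi=(\phi_h-P_h\phi)-(\phi-P_h\phi)$, so that $\langle B_h(\phi_h-\phi),\,q-q_h\rangle=\langle B_h(\phi_h-P_h\phi),\,q-q_h\rangle-\langle B_h(\phi-P_h\phi),\,q-q_h\rangle$; transposing the last term to the right-hand side produces exactly the claimed estimate. I do not expect a genuine obstacle here, as no analysis is needed beyond the two inequalities and the identity. The only points requiring care are recalling that admissibility $Q_{ad}^h\subset Q_{ad}$ legitimizes $q_h$ as a test function in the continuous inequality and that $B_h\phi=B\phi$; the remaining difficulty is purely bookkeeping, and the one thing to watch is the direction of each inequality when a term is dropped or split, since a single sign slip would reverse the final bound.
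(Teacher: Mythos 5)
Your proposal is correct and follows essentially the same route as the paper: both arguments test the continuous inequality \eqref{eq:AMP-q} with $q_h$ (legitimate since $Q_{ad}^h\subset Q_{ad}$ and $B_h=B$ on $V$), combine it with the discrete inequality \eqref{eq:ADP-q} to obtain $\langle B_h(\phi_h-\phi),q-q_h\rangle\geq\alpha\|q-q_h\|_Q^2+\langle B_h\phi_h+\alpha q_h,q-p_h\rangle$, and then insert $P_h\phi$ by the same splitting. Your exact identity plus term-dropping is just a slightly more explicit bookkeeping of the paper's ``add the two inequalities'' step.
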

\begin{proof}
Since $Q_{ad}^h\subset Q_{ad}$ and $B_h=B$ on $V$, we find from
\eqref{eq:AMP-q} and \eqref{eq:ADP-q} that
\begin{align*}
\langle B_h\phi_h+\alpha q_h, q-q_h\rangle
&\geq \langle B_h\phi_h+\alpha q_h, q-p_h\rangle  \quad \forall p_h\in Q_{ad}^h.\\
-\langle B_h\phi+\alpha q, q-q_h\rangle &\geq 0.
\end{align*}
We find by adding the above two inequalities that
\begin{align*}
\langle B_h(\phi_h-\phi)+\alpha (q_h-q), q-q_h\rangle &\geq
\langle B_h\phi_h+\alpha q_h, q-p_h\rangle \quad \forall p_h\in
Q_{ad}^h,
\end{align*}
which implies
\begin{align*}
\langle B_h(\phi_h-P_h\phi), q-q_h\rangle &\geq \alpha
\|q-q_h\|_Q^2+\langle B_h\phi_h+\alpha q_h, q-p_h\rangle \\
&\quad+\langle B_h(\phi-P_h\phi), q-q_h\rangle \quad \forall
p_h\in Q_{ad}^h.
\end{align*}
This completes the proof.
\end{proof}

The following theorem derives an abstract a priori error estimate
for the control.

\begin{theorem}\label{thm:Err-q}
There hold
\begin{align*}
\| q- q_h\|_Q^2+\|u-u_h\|_W^2 &\leq C\left(\|B\phi-\Pi_h(B
\phi)\|_Q^2+\|q-\Pi_hq\|_Q^2+\|\phi-P_h\phi\|_h^2\right)\\&\quad
+C\|u-P_hu\|_W^2.
\end{align*}
\end{theorem}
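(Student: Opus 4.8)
My starting point is Lemma \ref{lem:Err-q-abstract}, which already isolates $\alpha\|q-q_h\|_Q^2$ on the right-hand side. The plan is to estimate the left-hand side $\langle B_h(\phi_h-P_h\phi),q-q_h\rangle$ from above and thereby absorb the control-error term. The natural choice is $p_h=\Pi_h q$ in the lemma, which is admissible by Assumption (Q); this converts the term $\langle B_h\phi_h+\alpha q_h,q-p_h\rangle$ into $\langle B_h\phi_h+\alpha q_h,q-\Pi_h q\rangle$, and since $q-\Pi_h q$ is $Q$-orthogonal to $Q_h$ by \eqref{eq:Piq}, the piece pairing against anything already in $Q_h$ drops out. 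I must be careful here: $B_h\phi_h$ need not lie in $Q_h$, so I rewrite this inner product as $\langle B_h\phi_h - \Pi_h(B_h\phi_h)+\alpha(q_h-\Pi_h q_h),\,q-\Pi_h q\rangle$ plus a vanishing orthogonal remainder, and then control it by $\|B\phi-\Pi_h(B\phi)\|_Q$ and $\|q-\Pi_h q\|_Q$ using the triangle inequality and Cauchy--Schwarz.

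Next I rearrange Lemma \ref{lem:Err-q-abstract} into the form
\begin{align*}
\alpha\|q-q_h\|_Q^2 &\leq \langle B_h(\phi_h-P_h\phi),q-q_h\rangle
-\langle B_h(\phi-P_h\phi),q-q_h\rangle \\
&\quad -\langle B_h\phi_h+\alpha q_h,q-\Pi_h q\rangle.
\end{align*}
The middle term is estimated directly by the trace-type bound \eqref{eq:trace-QV}, giving $\|B_h(\phi-P_h\phi)\|_Q\leq C\|\phi-P_h\phi\|_h$, followed by Cauchy--Schwarz and Young's inequality to split off a controllable multiple of $\|q-q_h\|_Q^2$. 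The first term, $\langle B_h(\phi_h-P_h\phi),q-q_h\rangle$, is the crux and requires connecting the discrete adjoint error $\phi_h-P_h\phi$ back to the control error; this is where I expect the real work.

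For that first term I would exploit the discrete equations. Subtracting \eqref{eq:Php} from \eqref{eq:ADP-p} shows that $\phi_h-P_h\phi$ solves a discrete adjoint problem with data $u_h-u$, while subtracting \eqref{eq:Phu} from \eqref{eq:ADP-u} shows $u_h-P_hu$ solves a discrete state problem with right-hand side $\langle q_h-q,B_h\cdot\rangle$. The standard duality trick is to test the adjoint-error equation with $v_h=u_h-P_hu$ and the state-error equation with $v_h=\phi_h-P_h\phi$, so that the bilinear forms cancel and one obtains an identity of the shape $\langle B_h(\phi_h-P_h\phi),q-q_h\rangle = -\|u_h-P_hu\|_W^2 + (\text{terms in }u-P_hu\text{ and }u_h-u)$. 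Crucially this produces a \emph{negative} $\|u-u_h\|_W^2$-type contribution, which is exactly what lets the $\|u-u_h\|_W^2$ term appear with the right sign on the left of the final inequality. I then bound the residual cross terms using \eqref{eq:Poincare-type} to pass from $\|\cdot\|_W$ to $\|\cdot\|_h$ where needed, apply Young's inequality throughout, and absorb all small multiples of $\|q-q_h\|_Q^2$ and $\|u-u_h\|_W^2$ into the left-hand side.

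The main obstacle is the bookkeeping in this duality step: keeping track of which norm ($\|\cdot\|_W$ versus $\|\cdot\|_h$) each intermediate quantity naturally carries, invoking Assumption (P-T) at exactly the right places, and splitting $u-u_h = (u-P_hu)+(P_hu-u_h)$ so that the approximation error $\|u-P_hu\|_W$ lands on the right while the discrete quantity is either absorbed or controlled via the negative term generated by duality. Once every cross term is tamed by Young's inequality with small enough constants, collecting the surviving terms yields the claimed estimate with a generic constant $C$.
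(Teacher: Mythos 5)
Your plan reproduces the paper's own argument: Lemma \ref{lem:Err-q-abstract} with $p_h=\Pi_hq$, the duality cancellation between the error equations for $P_hu-u_h$ and $P_h\phi-\phi_h$ that produces the negative $\|P_hu-u_h\|_W^2$, the $\Pi_h$-orthogonality to reduce $\langle B_h\phi_h+\alpha q_h,\,q-\Pi_hq\rangle$ to terms in $B\phi-\Pi_h(B\phi)$ and $q-\Pi_hq$ plus a remainder controlled through $\|B_h(\phi-\phi_h)\|_Q$, and Young's inequality with absorption of the discrete pieces. The only detail you leave implicit is the auxiliary stability bound $\|P_h\phi-\phi_h\|_h\leq C\|u-u_h\|_W$ (the paper's \eqref{eq:Eq2}), obtained by testing the discrete adjoint error equation with $P_h\phi-\phi_h$ and using Assumption (P-T), which is exactly the bookkeeping your final paragraph anticipates.
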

\begin{proof}
From \eqref{eq:ADP-u}-\eqref{eq:ADP-p} and the definition of
$P_h$, we have
\begin{align}
a_h(P_hu-u_h,v_h)&= \langle q-q_h,B_hv_h\rangle\quad \forall v_h\in V_h,\label{eq:E11}\\
a_h(v_h, P_h\phi-\phi_h)&=(u-u_h,v_h)\quad \forall v_h\in
V_h.\label{eq:E12}
\end{align}
Take $v_h=P_hu-u_h$ in \eqref{eq:E12}, $v_h=P_h\phi-\phi_h$ in
\eqref{eq:E11} and subtract the resulting equations to find
\begin{align*}
\langle q-q_h, B_h(P_h\phi-\phi_h)\rangle-(u-u_h,P_hu-u_h)=0.
\end{align*}
This implies
\begin{align*}
\langle
q-q_h,B_h(\phi_h-P_h\phi)\rangle+\|P_hu-u_h\|_W^2=(u-P_hu,u_h-P_hu).
\end{align*}
Using Lemma \ref{lem:Err-q-abstract} with $p_h=\Pi_hq$, we find
that
\begin{align}
\alpha \|q-q_h\|_Q^2+\|P_hu-u_h\|_W^2 &\leq -\langle
B_h\phi_h+\alpha q_h, q-p_h\rangle
-\langle B_h(\phi-P_h\phi), q-q_h\rangle \notag \\
&\quad + (u-P_hu,u_h-P_hu)\notag \\
&=-\langle B_h\phi+\alpha q, q-p_h\rangle
-\langle B_h(\phi-P_h\phi), q-q_h\rangle \notag\\
&\quad  -\langle B_h(\phi_h-\phi)+\alpha
(q_h-q), q-p_h\rangle \notag\\
&\quad + (u-P_hu,u_h-P_hu) \notag\\
&=-\langle B\phi-\Pi_h\left(B
\phi\right)+\alpha (q-p_h), q-p_h\rangle\notag\\
&\quad-\langle B_h(\phi-P_h\phi), q-q_h\rangle + (u-P_hu,u_h-P_hu) \notag\\
&\quad  -\langle B_h(\phi_h-\phi)+\alpha (q_h-q), q-p_h\rangle\notag\\
&=-\langle B\phi-\Pi_h\left(B
\phi\right)+\alpha (q-p_h), q-p_h\rangle\notag\\
&\quad-\langle B_h(\phi-P_h\phi), q-q_h\rangle + (u-P_hu,u_h-P_hu) \notag\\
&\quad  -\langle B_h(\phi_h-\phi)+\alpha (p_h-q), q-p_h\rangle\notag\\
&\leq C\left(\|B\phi-\Pi_h(B
\phi)\|_Q^2+\|q-p_h\|_Q^2+\|B_h(\phi-P_h\phi)\|_Q^2\right)\notag\\
&\quad+
C\left(\|B_h(\phi-\phi_h)\|_Q\;\|q-p_h\|_Q+\|u-P_hu\|_W^2\right)\notag\\
&\quad +
\frac{1}{2}\|u_h-P_hu\|_W^2+\frac{\alpha}{2}\|q-q_h\|_Q^2.\label{eq:Eq1}
\end{align}
From the error equation \eqref{eq:E12}, we have
\begin{align*}
a_h(P_h\phi_h-\phi_h,P_h\phi-\phi_h)=(u-u_h,P_h\phi-\phi_h)\leq C
\|u-u_h\|_W \|P_h\phi-\phi_h\|_W.
\end{align*}
By the assumption \eqref{eq:Poincare-type} and the ellipticity of
$a_h$, we find
\begin{align}
\|P_h\phi-\phi_h\|_h \leq C\|u-u_h\|_W.\label{eq:Eq2}
\end{align}
Now using the assumption \eqref{eq:trace-QV} and \eqref{eq:Eq2},
we find
\begin{align*}
\|B_h(\phi-\phi_h)\|_Q &\leq C \|\phi-\phi_h\|_h \leq C \|\phi-P_h\phi_h\|_h+ C \|P_h\phi-\phi_h\|_h\\
& \leq C \|\phi-P_h\phi_h\|_h+ C \|u-u_h\|_W\\
& \leq C \left(\|\phi-P_h\phi\|_h+\|u-P_hu\|_W+\|P_hu-u_h\|_W\right).
\end{align*}
Using this estimate in \eqref{eq:Eq1}, we complete the proof.
\end{proof}

We now derive the error estimates for the state and the adjoint
state variables.

\begin{theorem}\label{thm:Err-u} There hold
\begin{align*}
\|\phi-\phi_h\|_h & \leq C\left( \|\phi-P_h\phi\|_h + \|B\phi-\Pi_h(B
\phi)\|_Q+\|q-\Pi_hq\|_Q+\|u-P_hu\|_W\right),\\
\|u-u_h\|_h & \leq  C\left( \|\phi-P_h\phi\|_h + \|B\phi-\Pi_h(B
\phi)\|_Q+\|q-\Pi_hq\|_Q+\|u-P_hu\|_h\right).
\end{align*}
\end{theorem}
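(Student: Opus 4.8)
The plan is to prove both bounds by the triangle inequality through the projections $P_h\phi$ and $P_hu$, reducing everything to the already-established control/state estimate of Theorem \ref{thm:Err-q}.

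For the adjoint state, I would first split $\|\phi-\phi_h\|_h \leq \|\phi-P_h\phi\|_h + \|P_h\phi-\phi_h\|_h$. The second term is controlled by the inequality \eqref{eq:Eq2} derived inside the proof of Theorem \ref{thm:Err-q}, namely $\|P_h\phi-\phi_h\|_h \leq C\|u-u_h\|_W$. It then only remains to bound $\|u-u_h\|_W$, which is immediate from Theorem \ref{thm:Err-q}: since that theorem bounds $\|q-q_h\|_Q^2+\|u-u_h\|_W^2$ by the stated right-hand side, taking square roots and using $\sqrt{a+b}\leq\sqrt a+\sqrt b$ gives $\|u-u_h\|_W \leq C(\|B\phi-\Pi_h(B\phi)\|_Q+\|q-\Pi_hq\|_Q+\|\phi-P_h\phi\|_h+\|u-P_hu\|_W)$. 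Combining the two displays yields the first claim directly, with the term $\|u-P_hu\|_W$ appearing exactly as in the statement.

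For the state, I would split $\|u-u_h\|_h \leq \|u-P_hu\|_h + \|P_hu-u_h\|_h$ and bound the second term from the discrete error equation \eqref{eq:E11}. Testing \eqref{eq:E11} with $v_h=P_hu-u_h$ and invoking the ellipticity of $a_h$ gives $\tilde c\,\|P_hu-u_h\|_h^2 \leq \langle q-q_h, B_h(P_hu-u_h)\rangle$. The key observation is that assumption \eqref{eq:trace-QV}, applied with the admissible choice $v=0\in V$, yields $\|B_hv_h\|_Q \leq C\|v_h\|_h$ for every $v_h\in V_h$; hence by the Cauchy--Schwarz inequality on $Q$ we obtain $\|P_hu-u_h\|_h \leq C\|q-q_h\|_Q$. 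Bounding $\|q-q_h\|_Q$ by Theorem \ref{thm:Err-q} and using the Poincar\'e-type inequality \eqref{eq:Poincare-type} to replace the resulting $\|u-P_hu\|_W$ by $\|u-P_hu\|_h$ then produces the second estimate.

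I do not anticipate a genuine obstacle, since the heavy lifting was already carried out in Theorem \ref{thm:Err-q}; the only point that requires care is the specialization $v=0$ of the trace assumption \eqref{eq:trace-QV} to obtain the boundedness of $B_h$ on the discrete space $V_h$, which is precisely what makes the test-function argument for $\|P_hu-u_h\|_h$ close cleanly. One should also keep track of which projection-error norm ($\|u-P_hu\|_W$ versus $\|u-P_hu\|_h$) is required in each of the two estimates, converting between them through \eqref{eq:Poincare-type} only where needed.
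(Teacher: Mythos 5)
Your proposal is correct and follows essentially the same route as the paper: the adjoint-state bound via the triangle inequality, \eqref{eq:Eq2} and Theorem \ref{thm:Err-q} is identical, and the state bound likewise goes through \eqref{eq:E11} and coercivity. The only difference is how you control $\langle q-q_h, B_h(P_hu-u_h)\rangle$: the paper splits $B_h(P_hu-u_h)=B_h(P_hu-u)+B_h(u-u_h)$ so that \eqref{eq:trace-QV} applies in its stated $v-v_h$ form, which leaves $\|u-u_h\|_h$ on the right and requires a final absorption, whereas you take $v=0$ in \eqref{eq:trace-QV} to get the discrete bound $\|B_hv_h\|_Q\leq C\|v_h\|_h$ and conclude $\|P_hu-u_h\|_h\leq C\|q-q_h\|_Q$ directly; since $0\in V$, this specialization is legitimate (and is borne out by the concrete verification via the enriching map), so your variant is valid and in fact slightly cleaner.
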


\begin{proof}
The estimate in \eqref{eq:Eq2} together with the estimate in
Theorem \ref{thm:Err-q} and the triangle inequality imply
\begin{align*}
\|\phi-\phi_h\|_h&\leq \|\phi-P_h\phi\|_h+\|P_h\phi-\phi_h\|\\
&\leq \|\phi-P_h\phi\|_h+ C\|u-u_h\|_W\\
&\leq C\left( \|\phi-P_h\phi\|_h + \|B\phi-\Pi_h(B
\phi)\|_Q+\|q-\Pi_hq\|_Q+\|u-P_hu\|_W\right).
\end{align*}
The error equation \eqref{eq:E11} and the assumption \eqref{eq:trace-QV} imply
\begin{align*}
a_h(P_hu-u_h,P_hu-u_h)&=\langle q-q_h,B_h(P_hu-u_h)\rangle\\
& =\langle q-q_h,B_h(P_hu-u)\rangle+\langle q-q_h, B_h(u-u_h)\rangle\\
&\leq C\left(\|u-P_hu\|_h+\|u-u_h\|_h\right)\|q-q_h\|_Q.
\end{align*}
The rest of the proof follows from Theorem \ref{thm:Err-q}.
\end{proof}

Next we will develop an abstract setting for a posteriori error
control. To this end, define the reconstructions  $Ru \in V$ and
$R\phi \in V$ by
\begin{align}
a(Ru,v)&=f(v)+\langle q_h,Bv\rangle \quad \forall v\in V,\label{eq:Ru}\\
a(v, R\phi)&=(u-u_d,v)\quad \forall v\in V.\label{eq:Rp}
\end{align}
From the above definitions and \eqref{eq:AMP-u}-\eqref{eq:AMP-p},
we have
\begin{align}
a(u-Ru,v)&=\langle q-q_h,Bv\rangle \quad \forall v\in V,\label{eq:ERu}\\
a(v,\phi-R\phi)&=(u-u_h,v)\quad \forall v\in V.\label{eq:ERp}
\end{align}

\par
\noindent The following lemma will be useful in the subsequent a
posteriori error analysis:

\begin{lemma}\label{lem:Err-q-Apost} There hold
\begin{align*}
\langle B(R\phi-\phi), q-q_h\rangle &\geq \alpha
\|q-q_h\|_Q^2+\langle B_h\phi_h+\alpha q_h, q-p_h\rangle \\
&\quad+\langle B_h(R\phi-\phi_h), q-q_h\rangle \quad \forall
p_h\in Q_{ad}^h.
\end{align*}
\end{lemma}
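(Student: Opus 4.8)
The plan is to mimic the proof of Lemma~\ref{lem:Err-q-abstract}, replacing the discrete projection $P_h\phi$ by the continuous reconstruction $R\phi$. First I would exploit the two variational inequalities \eqref{eq:AMP-q} and \eqref{eq:ADP-q} exactly as before. Since $Q_{ad}^h\subset Q_{ad}$, the continuous inequality \eqref{eq:AMP-q} may be tested with $p=q_h$, yielding
\begin{align*}
\langle B\phi+\alpha q,\, q_h-q\rangle \geq 0,
\end{align*}
and the discrete inequality \eqref{eq:ADP-q} gives, for every $p_h\in Q_{ad}^h$,
\begin{align*}
\langle B_h\phi_h+\alpha q_h,\, p_h-q_h\rangle \geq 0.
\end{align*}
The idea is to add these two, so that the sign-indefinite terms combine into something controlled by $\alpha\|q-q_h\|_Q^2$ plus a computable remainder.

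The key algebraic manipulation is to rewrite the sum of these inequalities in terms of $q-q_h$. Writing $p_h-q_h = (p_h-q)+(q-q_h)$ in the discrete inequality and adding the continuous one, I would obtain
\begin{align*}
\langle B_h(\phi_h-\phi)+\alpha(q_h-q),\, q-q_h\rangle \geq \langle B_h\phi_h+\alpha q_h,\, q-p_h\rangle,
\end{align*}
which after moving the $\alpha(q_h-q)$ term to the right and noting $\langle \alpha(q-q_h),q-q_h\rangle=\alpha\|q-q_h\|_Q^2$ produces
\begin{align*}
\langle B_h(\phi_h-\phi),\, q-q_h\rangle \geq \alpha\|q-q_h\|_Q^2+\langle B_h\phi_h+\alpha q_h,\, q-p_h\rangle.
\end{align*}
At this stage the only difference from the a~priori lemma is the appearance of $R\phi$ in place of $P_h\phi$, so the final step is simply to insert $R\phi$ by adding and subtracting it inside the bracket on the left: using $B\phi=B_h\phi$ on $V$ (the compatibility $B_hv=Bv$ for $v\in V$ together with $\phi,R\phi\in V$), I split $\langle B_h(\phi_h-\phi),q-q_h\rangle=\langle B_h(\phi_h-R\phi),q-q_h\rangle+\langle B(R\phi-\phi),q-q_h\rangle$ and rearrange to isolate $\langle B(R\phi-\phi),q-q_h\rangle$ on the left, moving $\langle B_h(R\phi-\phi_h),q-q_h\rangle$ to the right with the correct sign.

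I expect the only delicate point to be bookkeeping the signs and the identification $B\phi=B_h\phi$, $B R\phi = B_h R\phi$ so that the term $\langle B_h(R\phi-\phi_h),q-q_h\rangle$ appears with the stated sign; since $R\phi\in V$, the operator $B_h$ agrees with $B$ on it, and there is no genuine analytic obstacle beyond the convexity/variational-inequality structure already used in Lemma~\ref{lem:Err-q-abstract}. The proof is therefore essentially a sign-tracking exercise paralleling the earlier lemma, with the reconstruction replacing the projection.
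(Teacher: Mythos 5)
Your proposal is correct and follows essentially the same route as the paper: add the two variational inequalities (testing \eqref{eq:AMP-q} with $p=q_h$ and using $Q_{ad}^h\subset Q_{ad}$ together with $B_h=B$ on $V$), extract the $\alpha\|q-q_h\|_Q^2$ term, and then split $\phi_h-\phi$ through $R\phi$ rather than $P_h\phi$. The sign bookkeeping you describe is exactly what the paper does, so there is nothing to add.
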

\begin{proof}
Using the assumptions  $Q_{ad}^h\subset Q_{ad}$, $B_h=B$ on $V$,
and the inequalities \eqref{eq:AMP-q} and \eqref{eq:ADP-q}, we
find
\begin{align*}
\langle B_h\phi_h+\alpha q_h, q-q_h\rangle
&\geq \langle B_h\phi_h+\alpha q_h, q-p_h\rangle  \quad \forall p_h\in Q_{ad}^h.\\
-\langle B_h\phi+\alpha q, q-q_h\rangle &\geq 0.
\end{align*}
Add the above two inequalities and find
\begin{align*}
\langle B_h(\phi_h-\phi)+\alpha (q_h-q), q-q_h\rangle &\geq
\langle B_h\phi_h+\alpha q_h, q-p_h\rangle \quad \forall p_h\in
Q_{ad}^h.
\end{align*}
This trivially implies
\begin{align*}
\langle B(R\phi-\phi), q-q_h\rangle &\geq \alpha
\|q-q_h\|_Q^2+\langle B_h\phi_h+\alpha q_h, q-p_h\rangle \\
&\quad+\langle B_h(R\phi-\phi_h), q-q_h\rangle \quad \forall
p_h\in Q_{ad}^h.
\end{align*}
Hence the proof.
\end{proof}

The first result that will be useful in a posteriori error
estimates for the control is the following:

\begin{theorem}\label{thm:Err-up-Apost} There hold
\begin{align*}
\|q-q_h\|_Q+\|u-Ru\|_W &\leq
C\left(\|Ru-u_h\|_W+\|R\phi-\phi_h\|_h\right)\\
&\quad +C\|(B_h\phi_h+\alpha q_h)-\Pi_h(B_h\phi_h+\alpha q_h)\|_Q.
\end{align*}
\end{theorem}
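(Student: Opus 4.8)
\textbf{Proof proposal for Theorem \ref{thm:Err-up-Apost}.}

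The plan is to mirror the a priori argument of Theorem \ref{thm:Err-q}, but now using the \emph{reconstructions} $Ru, R\phi \in V$ in place of the projections $P_hu, P_h\phi$, and leaning on the a posteriori version of the key inequality, namely Lemma \ref{lem:Err-q-Apost}. First I would extract the two error equations \eqref{eq:ERu}--\eqref{eq:ERp}, take $v = u - Ru$ in \eqref{eq:ERp} and $v = \phi - R\phi$ in \eqref{eq:ERu}, and subtract. Since $a(\cdot,\cdot)$ is bilinear on $V\times V$ (so the two cross terms in $a$ cancel), this produces the orthogonality-type identity
\begin{align*}
\langle q-q_h, B(R\phi-\phi)\rangle + \|u-Ru\|_W^2 = 0,
\end{align*}
where I have used $\|u-u_h\|_W$ appearing from the right side of \eqref{eq:ERp} tested against $u-Ru$; care must be taken here since $u_h \in V_h$ but $Ru \in V$, so I would instead test \eqref{eq:ERu} with $v = R\phi - \phi$ and \eqref{eq:ERp} with $v = u - Ru$, giving $a(u-Ru, R\phi-\phi) = \langle q-q_h, B(R\phi-\phi)\rangle$ and $a(u-Ru, \phi - R\phi) = (u-u_h, u-Ru)$, and combine these to isolate $\langle q - q_h, B(R\phi-\phi)\rangle$ in terms of $(u-u_h, u-Ru)$.

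Next I would invoke Lemma \ref{lem:Err-q-Apost} with the choice $p_h = \Pi_h q$, which is admissible by Assumption (Q). This lower-bounds $\langle B(R\phi-\phi), q-q_h\rangle$ by $\alpha\|q-q_h\|_Q^2$ plus the two residual terms $\langle B_h\phi_h + \alpha q_h, q - \Pi_h q\rangle$ and $\langle B_h(R\phi-\phi_h), q-q_h\rangle$. Combining with the identity from the previous step turns the problem into an estimate of the form
\begin{align*}
\alpha\|q-q_h\|_Q^2 + \|u-Ru\|_W^2 \leq -\langle B_h\phi_h+\alpha q_h, q-\Pi_h q\rangle - \langle B_h(R\phi-\phi_h), q-q_h\rangle + (\text{data from the } u\text{-identity}).
\end{align*}
The first residual term I would rewrite using the defining property \eqref{eq:Piq} of the $Q$-projection: since $\langle \Pi_h(B_h\phi_h+\alpha q_h) - (B_h\phi_h+\alpha q_h), p_h\rangle = 0$ for all $p_h \in Q_h$ and $q - \Pi_h q$ is itself tested against something in $Q_h$, I can insert $\Pi_h(B_h\phi_h+\alpha q_h)$ for free, producing the computable quantity $\|(B_h\phi_h+\alpha q_h) - \Pi_h(B_h\phi_h+\alpha q_h)\|_Q$ after Cauchy--Schwarz; this is precisely the third term on the right-hand side of the statement.

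The remaining terms I would bound by Cauchy--Schwarz, the continuity bound \eqref{eq:trace-QV} (to pass from $\|B_h(R\phi-\phi_h)\|_Q$ to $\|R\phi-\phi_h\|_h$), and the Poincar\'e-type bound \eqref{eq:Poincare-type} together with Young's inequality, hiding the $\frac{\alpha}{2}\|q-q_h\|_Q^2$ and $\frac12\|u-Ru\|_W^2$ contributions back on the left. The term $(u-u_h, u-Ru)$ I would split via the triangle inequality $\|u-u_h\|_W \leq \|u-Ru\|_W + \|Ru - u_h\|_W$ and absorb the $\|u-Ru\|_W^2$ piece, leaving $\|Ru-u_h\|_W$ on the right. \textbf{The main obstacle} I anticipate is the careful bookkeeping in the term $\langle B_h(R\phi - \phi_h), q-q_h\rangle$: to surface $\|R\phi - \phi_h\|_h$ rather than an uncontrolled $\|q-q_h\|_Q$ factor, I must again apply Young's inequality so that the $\|q-q_h\|_Q^2$ factor is absorbed on the left while $\|R\phi-\phi_h\|_h^2$ survives, and then take square roots to return to the first-power norms displayed in the theorem. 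Tracking which $\|q-q_h\|_Q^2$ contributions can be absorbed (there will be several, from each Young step) without exhausting the $\alpha\|q-q_h\|_Q^2$ budget is the delicate part; choosing the Young parameters so their total is strictly less than $\alpha$ is what makes the absorption legitimate.
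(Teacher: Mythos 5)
Your proposal is correct and follows essentially the same route as the paper: the same error equations \eqref{eq:ERu}--\eqref{eq:ERp} tested with $u-Ru$ and $\phi-R\phi$, Lemma \ref{lem:Err-q-Apost} with $p_h=\Pi_hq$, the $\Pi_h$-orthogonality insertion to surface the computable residual $\|(B_h\phi_h+\alpha q_h)-\Pi_h(B_h\phi_h+\alpha q_h)\|_Q$, and Cauchy--Schwarz with \eqref{eq:trace-QV} plus Young absorption. The only loose end is the factor $\|q-\Pi_hq\|_Q$ left after Cauchy--Schwarz, which the paper removes by replacing $q-\Pi_hq$ with $q-q_h$ via the same orthogonality (and which you could equally absorb using $\|q-\Pi_hq\|_Q\leq\|q-q_h\|_Q$); this is a minor bookkeeping point, not a gap.
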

\begin{proof}
Taking $v=u-Ru$ in \eqref{eq:ERp} and $v=\phi-R\phi$ in \eqref{eq:ERu} and then subtracting the resulting equations,
\begin{align*}
\langle q-q_h,B(\phi-R\phi)\rangle-(u-u_h,u-Ru)=0.
\end{align*}
It trivially implies
\begin{align*}
\langle q-q_h,B(R\phi-\phi)\rangle+\|u-Ru\|_W^2=-(Ru-u_h,u-Ru).
\end{align*}
Using the estimate in Lemma \ref{lem:Err-q-Apost} in the above
equation, we find
\begin{align*}
\alpha \|q-q_h\|_Q^2 + \|u-Ru\|_W^2 &\leq  -(Ru-u_h,u-Ru)-\langle B_h\phi_h+\alpha q_h, q-p_h\rangle \\
&\quad-\langle B_h(R\phi-\phi_h), q-q_h\rangle \quad \forall
p_h\in Q_{ad}^h.
\end{align*}
Let $p_h=\Pi_hq \in Q^h_{ad}$. Then
\begin{align*}
\langle B_h\phi_h+\alpha q_h, q-\Pi_hq\rangle&=\langle
(B_h\phi_h+\alpha q_h)-\Pi_h(B_h\phi_h+\alpha q_h),\;
q-\Pi_hq\rangle\\
&=\langle (B_h\phi_h+\alpha q_h)-\Pi_h(B_h\phi_h+\alpha q_h),\;
q-q_h\rangle.
\end{align*}
The proof then follows from  the Cauchy-Schwarz inequality and the
assumption \eqref{eq:trace-QV}.
\end{proof}

Next, the result that will be useful in the a posteriori error
analysis of the state and the adjoint states is derived below.

\begin{theorem}\label{thm:Apost-up}
There hold
\begin{align*}
\|u-u_h\|_h+\|\phi-\phi_h\|_h&\leq C\left(\|Ru-u_h\|_h+\|R\phi-\phi_h\|_h\right)\\
&\quad +C\|(B_h\phi_h+\alpha q_h)-\Pi_h(B_h\phi_h+\alpha q_h)\|_Q.
\end{align*}
\end{theorem}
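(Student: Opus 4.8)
The plan is to bound the two energy-norm errors $\|u-u_h\|_h$ and $\|\phi-\phi_h\|_h$ by splitting each through its reconstruction via the triangle inequality:
\[
\|u-u_h\|_h\le \|u-Ru\|_h+\|Ru-u_h\|_h,\qquad \|\phi-\phi_h\|_h\le \|\phi-R\phi\|_h+\|R\phi-\phi_h\|_h.
\]
Since $Ru,R\phi\in V$ and $\|\cdot\|_h=\|\cdot\|_V$ on $V$, the second summand in each is already among the quantities we are allowed on the right-hand side. So the real work is to control the \emph{conforming} reconstruction errors $\|u-Ru\|_h=\|u-Ru\|_V$ and $\|\phi-R\phi\|_h=\|\phi-R\phi\|_V$ by the stated right-hand side.

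First I would estimate $\|u-Ru\|_V$. From the defining error equation \eqref{eq:ERu}, $a(u-Ru,v)=\langle q-q_h,Bv\rangle$ for all $v\in V$; testing with $v=u-Ru$, using ellipticity of $a$ and continuity of $B:V\to Q$, gives
\[
c\,\|u-Ru\|_V^2\le a(u-Ru,u-Ru)=\langle q-q_h,B(u-Ru)\rangle\le C\,\|q-q_h\|_Q\,\|u-Ru\|_V,
\]
so $\|u-Ru\|_V\le C\|q-q_h\|_Q$. Next, for $\|\phi-R\phi\|_V$ I would use \eqref{eq:ERp}: $a(v,\phi-R\phi)=(u-u_h,v)$; testing with $v=\phi-R\phi$, applying ellipticity and then the Poincar\'e-type bound \eqref{eq:Poincare-type} on the right gives $c\,\|\phi-R\phi\|_V^2\le (u-u_h,\phi-R\phi)\le C\|u-u_h\|_W\|\phi-R\phi\|_V$, hence $\|\phi-R\phi\|_V\le C\|u-u_h\|_W\le C\|u-u_h\|_h$ (using \eqref{eq:Poincare-type} once more). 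Finally, $\|q-q_h\|_Q$ is controlled by Theorem \ref{thm:Err-up-Apost}, whose right-hand side is exactly $\|Ru-u_h\|_W+\|R\phi-\phi_h\|_h$ plus the data-oscillation/consistency term $\|(B_h\phi_h+\alpha q_h)-\Pi_h(B_h\phi_h+\alpha q_h)\|_Q$; and $\|Ru-u_h\|_W\le C\|Ru-u_h\|_h$ by \eqref{eq:Poincare-type}.

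Assembling these, $\|\phi-\phi_h\|_h\le \|R\phi-\phi_h\|_h+C\|u-u_h\|_h$, so the only coupling to watch is the appearance of $\|u-u_h\|_h$ on the right of the $\phi$-estimate. I expect this to be the main obstacle: I would substitute the $u$-estimate $\|u-u_h\|_h\le \|Ru-u_h\|_h+C\|q-q_h\|_Q$ into the $\phi$-bound, then replace $\|q-q_h\|_Q$ everywhere by the Theorem \ref{thm:Err-up-Apost} bound. Because that theorem bounds $\|q-q_h\|_Q$ by $\|Ru-u_h\|_W+\|R\phi-\phi_h\|_h$ plus the consistency term --- and crucially does \emph{not} reintroduce $\|u-u_h\|_h$ or $\|\phi-\phi_h\|_h$ --- the recursion closes without needing a smallness/kickback argument. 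Adding the two resulting inequalities and collecting constants yields the claimed combined estimate.
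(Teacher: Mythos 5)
Your proposal is correct and follows essentially the same route as the paper: triangle inequality through the reconstructions $Ru,R\phi$, the bounds $\|u-Ru\|_V\leq C\|q-q_h\|_Q$ and $\|\phi-R\phi\|_V\leq C\|u-u_h\|_W$ obtained by testing \eqref{eq:ERu} and \eqref{eq:ERp} with the respective errors, and then Theorem \ref{thm:Err-up-Apost} together with \eqref{eq:Poincare-type} to close the argument. Your explicit remark that the substitution does not reintroduce $\|u-u_h\|_h$ or $\|\phi-\phi_h\|_h$ (so no kickback is needed) is a useful clarification of a step the paper leaves implicit, but it is the same proof.
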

\begin{proof}
By the triangle inequality,
\begin{align*}
\|u-u_h\|_h\leq \|u-Ru\|_h+\|Ru-u_h\|_h.
\end{align*}
Taking $v=u-Ru$ in \eqref{eq:ERu} and since
$\|\cdot\|_h=\|\cdot\|_V$ on $V$, we find by using the continuity
of the operator $B$ that
\begin{align*}
\|u-Ru\|_V \leq C \|q-q_h\|_Q.
\end{align*}
The bound for $u-u_h$ then follows by using Theorem
\ref{thm:Err-up-Apost}. Similarly by the triangle inequality
\begin{align*}
\|\phi-\phi_h\|_h\leq \|\phi-R\phi\|_h+\|R\phi-\phi_h\|_h.
\end{align*}
Taking $v=\phi-R\phi$ in \eqref{eq:ERp} and again since
$\|\cdot\|_h=\|\cdot\|_V$ on $V$, we find by using the continuous
imbedding of $V$ in $W$ that
\begin{align*}
\|\phi-R\phi\|_V \leq C \|u-u_h\|_W.
\end{align*}
The rest of the proof follows from the assumption
\eqref{eq:Poincare-type} and the estimate for $\|u-u_h\|_h$.
\end{proof}

\section{A Specific and Discrete Setting}\label{sec:Discrete-Problem}
\subsection{Notations}
Denote the norm and semi-norm on $H^k(D)$ ($k\geq 0$) for any open domain $D\subset \R^s$ $(s\geq 1)$ by
$\|v\|_{k,D}$ and $|v|_{k,D}$. Note that the semi-norm $|\cdot|_{2,\Omega}$ defines a norm on $V=H^2(\Omega)\cap H^1_0(\Omega)$
which is equivalent to $\|\cdot\|_{2,\Omega}$.
Let $\cT_h$ be a regular simplicial subdivision of $\Omega$.
Denote the set of all interior edges/faces of $\cT_h$ by
 $\cE_h^i$, the set of boundary edges/faces by $\cE_h^b$, and
 define $\cE_h=\cE_h^i\cup\cE_h^b$.
 Let $h_T$=diam$ (T)$ and $h=\max\{ h_T : T\in\cT_h\}$.
 The diameter of any edge/face $e\in\cE_h$ will be denoted by $h_e$.
 We define the Sobolev space $H^s(\O,\cT_h)$ associated with the subdivision $\cT_h$
 as follows:
\begin{equation*}
  H^s(\O,\cT_h)=\{v\in L_2(\Omega) :\,v|_{T}\in H^s(T)\quad\forall\,~T\in \cT_h\}.
\end{equation*}
 The discontinuous finite element space is
\begin{align}
V_h=\{v\in  H^1_0(\Omega) : \, v|_{T}\in \bbP_2(T)\quad\forall\,~T\in\cT_h\},\label{eq:Vh-def}
\end{align}
 where $\bbP_2(D)$ is the space of polynomials of degree less than or equal to $2$
 restricted to the set $D$.
 It is clear that $V_h\subset  H^1_0(\Omega)\cap H^s(\O,\cT_h)$ for any positive integer $s$.
\par
 For any $e\in\cE_h^i$, there are two elements $T_+$ and $T_-$ such that
 $e=\partial T_+\cap\partial T_-$.
 Let $n_-$ be the unit normal of $e$ pointing from $T_-$ to $T_+$.
% (cf. Fig.~\ref{Fig1}).
 For any $v\in H^2(\Omega,\cT_h)$, we define the jump of the normal
 derivative of $v$ on $e$ by
\begin{equation*}
 \sjump{\nabla v} =
 \left. \nabla v_+\right|_{e}\cdot n_+ +
 \left. \nabla v_-\right|_{e}\cdot n_-
\end{equation*}
 where $v_\pm=v\big|_{T_\pm}$.
 For any $v\in H^3(\O,\cT_h)$,
 we define the mean and jump of the second order normal derivative of $v$ across $e$ by
\begin{equation*}
 \smean{\partial ^2 v/\partial n^2} =
 \frac{1}{2}\left(\partial^2 v_{+}/\partial n^2
 +\partial^2 v_{-}/\partial n^2\right),
\end{equation*}
and
\begin{equation*}
 \sjump{\partial ^2 v/\partial n^2} =
 \left(\partial^2 v_{+}/\partial n^2
 -\partial^2 v_{-}/\partial n^2\right),
\end{equation*}
respectively, where $n$ is either $n_+$ or $n_-$ (the sign of $n$ will not change the above quantities).

\par
For notational convenience, we also define jump and average on the
boundary edges. For any $e\in \cE_h^b$, there is a element
$T\in\cT_h$ such that
 $e=\partial T\cap \partial\Omega$.
 Let $n_e$ be the unit normal of $e$ that points outside $T$.
  For any $v\in H^2(T)$, we set on $e$
\begin{equation*}
 \sjump{\nabla v}= \nabla v \cdot n_e,
\end{equation*}
 and for any $v\in H^3(T)$, we set
\begin{equation*}
 \smean{\partial ^2 v/\partial n^2}=\partial ^2 v/\partial n^2.
\end{equation*}

We require the following trace inequality
\cite{Grisvard:1985:Singularities}: {\lemma \label{lem:trace}
There holds for $v\in H^2(\Omega)\cap H^1_0(\Omega)$ that
\begin{eqnarray*}
\|\partial v/\partial n\|_{L_2(\Gamma_i)} \leq C \|v\|_{2,\Omega}\quad \forall 1\leq i\leq m.
\end{eqnarray*}}
We also use the following inverse inequality on $V_h$
\cite{BScott:2008:FEM,Ciarlet:1978:FEM}:
{\lemma\label{lem:inverse} For $v_h\in V_h$, there holds
\begin{align*}
\|v_h\|_{L_2(e)} & \leq C h_e^{-1/2}
\|v_h\|_{L_2(T)}\,\,\forall\,T\in \cT_h,
\end{align*}
where $e$ is an edge of $T$.}\hspace{2cm}\endproof

\par
\smallskip
\noindent
{\bf Enriching Map}. Let $V_c\subset H^2(\Omega)\cap
H^1_0(\Omega)$ be the Hsieh-Clough-Toucher $C^1$ finite element
space associated with the triangulation $\cT_h$ (see
\cite{BScott:2008:FEM,Ciarlet:1978:FEM,BNeilan:2010:C0IPSingular}).
In the error analysis of discontinuous Galerkin methods, we use an
enriching map $E_h:V_h \rightarrow V_c$ that plays an important
role. As it is done in \cite{BNeilan:2010:C0IPSingular}, we define
$E_h:V_h \rightarrow V_c$ as follows: Let N be any degree of
freedom of $V_c$ i.e., N is either the evaluation of a shape
function or its first order derivatives at any vertex   or the
evaluation of the normal derivative of shape function at the
midpoint of any edge in $\cE_h$. Then, for any $v \in V_h$,
\begin{equation*}
N(E_hv_h)=\frac{1}{|\cT_N|}\sum_{T \in \cT_N}N(v_T).
\end{equation*}
where $\cT_N$ is the set of triangles sharing the degree of
freedom N and $|\cT_N|$ denotes the cardinality of $\cT_N$.

\par
\noindent The following Lemma states the approximation properties
satisfied by the map $E_h$ \cite{BNeilan:2010:C0IPSingular}:

\begin{lemma}\label{lem:EnrichApprx} Let $v \in V_h$.
It holds that
\begin{align*}
\sum_{T \in
\cT_h}\left(h_T^{-4}\|E_hv-v\|^2_{0,T}+h_T^{-2}\|\nabla(E_hv-v)\|^2_{0,T}\right)&\leq
C\Big(\sum_{e \in \cE_h^i}\frac{1}{h_e}\Big\|\jump{\frac{\p v}{\p
n}}\Big\|^2_{0,e}\Big) \;\;\forall \; v \in V_h,
\end{align*}
and
\begin{align*}
\sum_{T \in \cT_h}|E_hv-v|^2_{H^2(T)}\leq C\Big(\sum_{e \in
\cE_h^i}\frac{1}{h_e}\Big\|\jump{\frac{\p v}{\p
n}}\Big\|^2_{0,e}\Big)\;\; \forall \; v \in V_h.
\end{align*}
\end{lemma}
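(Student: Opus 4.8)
The statement to prove (Lemma on the enriching map $E_h$):

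The plan is to prove the two estimates of Lemma~\ref{lem:EnrichApprx} by reducing everything to a single scalar quantity, namely the sum over interior edges of $h_e^{-1}\|\jump{\partial v/\partial n}\|_{0,e}^2$, which measures how badly $v \in V_h$ fails to be $C^1$. The central observation is that $E_hv - v$ is nonzero only at those degrees of freedom $N$ of $V_c$ where the averaging in the definition $N(E_hv) = |\cT_N|^{-1}\sum_{T\in\cT_N} N(v_T)$ actually changes the value, i.e. where the traces of $v$ (or of its derivatives) disagree across element boundaries. Since $v \in V_h \subset H^1_0(\Omega)$ is already continuous, the only discontinuities are in the first derivatives, and these are controlled precisely by the normal-derivative jumps $\jump{\partial v/\partial n}$ on interior edges.

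First I would localize to a single element $T$ and work on the reference element $\hat T$ via an affine pullback, using standard scaling so that the $h_T$ powers come out correctly by homogeneity: an $L_2(T)$ norm scales like $h_T^{d/2}$, each derivative costs an $h_T^{-1}$, and an $L_2(e)$ edge norm scales like $h_e^{(d-1)/2}$. On $\hat T$ the space $\hat V_c$ (HCT) is finite-dimensional, so all norms on it are equivalent, and $E_hv - v$ restricted to $\hat T$ is determined by finitely many nodal differences $N(v_{T'}) - N(v_T)$ where $T'$ ranges over the neighbours sharing the node $N$. The key step is therefore to bound each such nodal difference by a jump term. Since the nodes $N$ are either vertex values/derivatives or edge-midpoint normal derivatives, and since $v$ is continuous across edges, every relevant difference is a difference of (tangential or normal) first derivatives of two polynomial pieces evaluated at a point of a shared edge; such a pointwise difference is estimated by $\|\jump{\partial v/\partial n}\|_{L_\infty(e)}$ (the tangential-derivative jump vanishes by continuity of $v$ along $e$), and then an inverse estimate on the polynomial space converts the $L_\infty$ bound to $h_e^{-1/2}\|\jump{\partial v/\partial n}\|_{0,e}$.

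After establishing the per-element, per-node bound, I would sum over the degrees of freedom attached to $T$ and then over all $T\in\cT_h$. A standard finite-overlap argument (each interior edge is shared by exactly two elements, and each node lies in a bounded number of elements by shape-regularity) shows that summing the local contributions reproduces exactly $\sum_{e\in\cE_h^i} h_e^{-1}\|\jump{\partial v/\partial n}\|_{0,e}^2$ up to a constant depending only on the shape-regularity of $\cT_h$. Collecting the $h_T$ weights from the scaling gives the $h_T^{-4}$ and $h_T^{-2}$ factors in the first inequality and the unweighted $H^2$-seminorm bound in the second.

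The main obstacle I anticipate is the careful bookkeeping in the reduction of nodal differences to normal-derivative jumps: one must verify that at shared vertices the differences of the full first-derivative data (not just the normal component) are still controlled solely by $\jump{\partial v/\partial n}$, exploiting that continuity of $v$ across all incident edges pins down the tangential derivatives and leaves only the normal-derivative mismatch free. Handling vertices where several elements meet (so that $N(E_hv)$ is an average over more than two values) requires writing each pairwise difference as a telescoping chain across a path of edges around the vertex, with the number of links bounded by shape-regularity; this is the combinatorial heart of the argument, and everything else is routine scaling and summation.
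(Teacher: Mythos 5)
The paper does not actually prove this lemma: it is quoted verbatim from Brenner--Neilan \cite{BNeilan:2010:C0IPSingular}, and the text only records the statement with that citation. Your sketch reconstructs essentially the argument used there (and in the earlier Brenner--Sung $C^0$IP work): reduce $E_hv-v$ on each $T$ to the finitely many nodal differences $N(v_{T'})-N(v_T)$, observe that continuity of $v\in V_h\subset H^1_0(\Omega)$ kills the value and tangential-derivative mismatches so that only $\sjump{\nabla v}$ survives, convert the pointwise jump to $h_e^{-1/2}\|\sjump{\nabla v}\|_{0,e}$ by an inverse estimate, telescope around vertices using shape regularity, and recover the $h_T$ weights by scaling. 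That is the correct and standard route, and the bookkeeping you flag (telescoping chains at vertices, finite overlap) is exactly where the work lies. The one point your plan is silent on is the boundary: with the averaging formula as literally written, the tangential-derivative degrees of freedom at boundary vertices need not vanish, so $E_hv$ would fail to lie in $V_c\subset H^1_0(\Omega)$; the cited construction sets those constrained degrees of freedom to zero and then bounds the resulting difference by telescoping to the triangle adjacent to the boundary edge (where the tangential derivative vanishes identically), so the right-hand side still involves only interior-edge jumps. With that standard modification added, your argument goes through.
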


\smallskip
\noindent Following \cite{BNeilan:2010:C0IPSingular}, the bilinear form for the numerical method is defined by
\begin{align}
a_h(w,v)&=\sum_{T\in\cT_h}\int_T D^2 w : D^2 v \,dx - \sum_{e\in \cE_h^i}\int_e\smean{\partial^2 w/\partial n^2}\sjump{\nabla v}\,ds \notag\\
& \quad- \sum_{e\in \cE_h^i}\int_e\smean{\partial^2 v/\partial n^2}\sjump{\nabla w}\,ds
 +\sum_{e\in \cE_h^i}\int_e\frac{\eta}{h_e}\sjump{\nabla w}\sjump{\nabla v}\,ds,
\end{align}
where $\eta>0$ is a real number. Define the following norm for
$v\in H^s(\Omega, \cT_h)$ for $s\geq 2$ :
\begin{equation*}
\|v\|_h^2=\left(\sum_{T\in\cT_h} \|D^2 v\|_{L_2(T)}^2
+\sum_{e\in \cE_h^i}\int_e\frac{\eta}{h_e}\sjump{\nabla v}^2 \,ds\right).
\end{equation*}

We refer to \cite{BSung:2005:DG4,BNeilan:2010:C0IPSingular} for a
proof of the following lemma.
\begin{lemma}\label{lem:Coercivity}
It holds that
\begin{equation*}
 a_h(w,v)\leq \tilde C \|w\|_h\|v\|_h \quad \forall w,\, v \in V_h.
\end{equation*}
For sufficiently large $\eta$, it holds that
\begin{equation*}
\tilde c \|v\|_h^2 \leq a_h(v,v) \quad \forall v\in V_h.
\end{equation*}
\end{lemma}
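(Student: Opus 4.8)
The plan is to read both estimates straight off the definitions of $a_h$ and $\|\cdot\|_h$, treating the three off-diagonal edge integrals with a scaled trace (inverse) estimate, and — for the coercivity — absorbing the single indefinite term by a tuned Young's inequality in which the large penalty parameter $\eta$ does the work. The one nonroutine ingredient is Lemma \ref{lem:inverse}: although it is stated for $v_h\in V_h$, its underlying element-wise scaling bound holds for any polynomial of fixed degree on $T$, so I apply it to the entries of $D^2v$ (which are constants on each triangle for $\bbP_2$), giving $\|\p^2 v/\p n^2\|_{L_2(e)}\le C h_e^{-1/2}\|D^2 v\|_{L_2(T)}$ for an edge $e$ of $T$.

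Continuity is bookkeeping. I bound the volume term by Cauchy--Schwarz, $\sum_{T\in\cT_h}\int_T D^2w:D^2v\le(\sum_T\|D^2w\|_{L_2(T)}^2)^{1/2}(\sum_T\|D^2v\|_{L_2(T)}^2)^{1/2}\le\|w\|_h\|v\|_h$. For each consistency integral over $e\in\cE_h^i$ I split the two factors as $\big(h_e^{1/2}\|\smean{\p^2 w/\p n^2}\|_{L_2(e)}\big)\big(h_e^{-1/2}\|\sjump{\nabla v}\|_{L_2(e)}\big)$, control the first factor by $\|D^2w\|_{L_2(T_+\cup T_-)}$ through the scaled trace estimate above, recognise the second as the edge contribution to the penalty seminorm, and finish by a discrete Cauchy--Schwarz over $\cE_h^i$, using that a shape-regular $\cT_h$ has a bounded number $C_0$ of edges per element. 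The two symmetric consistency terms and the penalty term are then each bounded by $C\|w\|_h\|v\|_h$.

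For coercivity I set $w=v$, so $a_h(v,v)=\sum_{T}\|D^2v\|_{L_2(T)}^2-2\sum_{e\in\cE_h^i}\int_e\smean{\p^2 v/\p n^2}\sjump{\nabla v}\,ds+\sum_{e\in\cE_h^i}\frac{\eta}{h_e}\|\sjump{\nabla v}\|_{L_2(e)}^2$, and the crux is the middle term. The scaled trace estimate gives $h_e\|\smean{\p^2 v/\p n^2}\|_{L_2(e)}^2\le C_I(\|D^2v\|_{L_2(T_+)}^2+\|D^2v\|_{L_2(T_-)}^2)$, and summing over interior edges with the overlap count yields $\sum_e h_e\|\smean{\p^2 v/\p n^2}\|_{L_2(e)}^2\le C_0C_I\sum_T\|D^2v\|_{L_2(T)}^2$. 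Young's inequality with a parameter $\varepsilon$ then gives $2\sum_e\int_e\smean{\p^2 v/\p n^2}\sjump{\nabla v}\le\varepsilon C_0C_I\sum_T\|D^2v\|_{L_2(T)}^2+\varepsilon^{-1}\sum_e h_e^{-1}\|\sjump{\nabla v}\|_{L_2(e)}^2$. Writing the penalty seminorm as $s(v)=\sum_e\eta h_e^{-1}\|\sjump{\nabla v}\|_{L_2(e)}^2$, so that $\sum_e h_e^{-1}\|\sjump{\nabla v}\|_{L_2(e)}^2=s(v)/\eta$, I obtain $a_h(v,v)\ge(1-\varepsilon C_0C_I)\sum_T\|D^2v\|_{L_2(T)}^2+(1-\tfrac{1}{\varepsilon\eta})s(v)$.

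Choosing $\varepsilon=1/(2C_0C_I)$ pins the first coefficient at $\tfrac12$, and then any $\eta\ge 4C_0C_I$ makes the second coefficient at least $\tfrac12$, so $a_h(v,v)\ge\tfrac12\|v\|_h^2$, which is the claim with $\tilde c=\tfrac12$. The main obstacle is exactly this last balancing: the penalty parameter must dominate the product $C_0C_I$ of the overlap count and the inverse-inequality constant, and it is this requirement that is recorded as the hypothesis ``$\eta$ sufficiently large''. Everything else reduces to Cauchy--Schwarz, the scaled trace bound, and tracking of constants.
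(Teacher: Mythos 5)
Your proof is correct and is the standard trace--inverse-inequality plus Young's-inequality argument; the paper itself does not prove Lemma \ref{lem:Coercivity} but defers to \cite{BSung:2005:DG4,BNeilan:2010:C0IPSingular}, where essentially this same argument is carried out. Your handling of the key points (applying the element-wise scaling bound to the piecewise-constant entries of $D^2v$, and balancing $\varepsilon$ against $\eta$ so that the penalty term absorbs the consistency term) matches that route, with the only cosmetic remark being that the penalty seminorm in $\|\cdot\|_h$ carries the weight $\eta/h_e$, so your factor $h_e^{-1/2}\|\sjump{\nabla v}\|_{L_2(e)}$ is $\eta^{-1/2}$ times the edge contribution to $\|v\|_h$ --- which only improves the constants.
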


\subsection{Discrete Boundary Control Problem}
The model we study in this case is the Model Problem 1 described in
Section \ref{sec:Model-Problem}. In this case the space
$V=H^2(\Omega)\cap H^1_0(\Omega)$ and $V_h$ is the one defined in
\eqref{eq:Vh-def}. The space $Q=L_2(\Gamma)$ and $W=L_2(\Omega)$.
Set $Q_{ad}=Q_b$, where $Q_b$ is defined in Section \ref{sec:Model-Problem}.
The discrete control space $Q_h$ is defined by $Q_h=\{p_h\in
L_2(\Gamma): p_h|_{e}\in P_0(e),\; \forall e\in\cE_h^b\}$ and
define the admissible set $Q^h_{ad}=\{p_h\in Q_h: \underline q\leq
p_h\leq \overline q\}$. It is clear that $Q_{ad}^h\subset Q_{ad}$ and $\Pi_hq\in Q_{ad}^h$ whenever $q\in Q_{ad}$.
The operator $B:V\rightarrow Q$ is nothing but the piece-wise ($\Gamma_i$-wise) normal derivative on $\Gamma$ and
$B_h :V_h\rightarrow Q_h$ is
defined by the piecewise (edge-wise) normal derivative, ie.,
$B_hv|_e=\left(\partial v_T/\partial n\right)|_e$, where $v_T=v|_T$ and $T$ be the triangle having the edge $e$ on its boundary. We now verify the assumption
\eqref{eq:Poincare-type} and \eqref{eq:trace-QV}. The inequality
\eqref{eq:Poincare-type} follows from the results on Poincar\'e
type inequalities in \cite{BWZ:2004:Poincare2}. The estimate in
\eqref{eq:trace-QV} follows form the well known trace inequality
on $H^2(\Omega)$ and the properties of enriching function $E_h$ as
follows: Let $v\in V$ and $v_h\in V_h$. Then
\begin{align*}
\sum_{e\in \cE_h^b} \|\partial (v-v_h)/\partial n\|_{0,e}^2 &\leq
2 \sum_{e\in \cE_h^b} \left(\|\partial (v-E_hv_h)/\partial
n\|_{0,e}^2+\|\partial
(v_h-E_hv_h)/\partial n\|_{0,e}^2\right)\\
&=2 \sum_{1\leq i\leq m} \|\partial (v-E_hv_h)/\partial
n\|_{0,\Gamma_i}^2+2 \sum_{e\in \cE_h^b}\|\partial
(v_h-E_hv_h)/\partial n\|_{0,e}^2
\end{align*}
Since $(v-E_hv_h)\in H^2(\Omega)\cap H^1_0(\Omega)$, the trace
inequality in Lemma \ref{lem:trace} implies that
\begin{align*}
\sum_{e\in \cE_h^b} \|\partial (v-v_h)/\partial n\|_{0,e}^2 \leq C
\|v-E_hv_h \|_{2,\Omega}^2+2 \sum_{e\in \cE_h^b} \|\partial
(v_h-E_hv_h)/\partial n\|_{0,e}^2,
\end{align*}
then the triangle inequality yields
\begin{align*}
\sum_{e\in \cE_h^b} \|\partial (v-v_h)/\partial n\|_{0,e}^2 \leq C
\left(\|v-v_h\|_h^2+\sum_{e\in \cE_h^b} \|\partial
(v_h-E_hv_h)/\partial n\|_{0,e}^2\right).
\end{align*}
Now the trace-inverse inequality in Lemma \ref{lem:inverse} on
discrete spaces and Lemma \ref{lem:EnrichApprx} completes the
proof of \eqref{eq:trace-QV}.

The abstract error estimates in Theorem \ref{thm:Err-q} and Theorem
\ref{thm:Err-u} are valid to the model problem under the
discussion.

The error analysis in \cite{BNeilan:2010:C0IPSingular} delivers
the following error estimates for the projections $P_hu$ and
$P_h\phi$:
\begin{align*}
\|u-P_hu\|_h &\leq C\left( \inf_{v_h\in V_h}\|u-v_h\|_h+
h\|f\|_{-1,\Omega}+h^{1/2} \inf_{p_h\in
Q_h}\|q-p_h\|_{0,\Gamma}\right),\\
\|\phi-P_h\phi\|_h &\leq C\left( \inf_{v_h \in V_h}\|\phi-v_h\|_h+
h^2\|u-u_d\|_{0,\Omega}\right).
\end{align*}
Using these estimates, Theorem \ref{thm:Err-q} and Theorem \ref{thm:Err-u}, we obtain the following error estimate:
\begin{align*}
\|q-q_h\|_{0,\Gamma}&+\|u-u_h\|_h+\|\phi-\phi_h\|_h \\
& \leq
C\left( \inf_{v_h \in V_h}\|\phi-v_h\|_h+\inf_{v_h \in
V_h}\|u-v_h\|_h+ h^2\|u-u_d\|_{0,\Omega} +
h\|f\|_{-1,\Omega}\right.\\&\quad\left.+\|\partial \phi/\partial
n-\Pi_h(\partial \phi/\partial
n)\|_{0,\Gamma}+\|q-\Pi_hq\|_{0,\Gamma}+h^{1/2} \inf_{p_h\in
Q_h}\|q-p_h\|_{0,\Gamma}\right).
\end{align*}
\par
\noindent At this moment we can apply the elliptic regularity to
derive concrete error estimates. Note that by the well posed-ness
of the problem, $u,\phi\in H^2(\Omega)$ and $q\in L_2(\Gamma)$.
Then the optimality condition \eqref{eq:MP-q1} implies that

$$q=\Pi_{[\underline q,\,\overline
q]}\left(-\frac{1}{\alpha}\frac{\partial \phi}{\partial
n}\right),$$ where $\Pi_{[a,b]}g(x)$ is defined by
$$\Pi_{[a,b]}g(x)=\min\{b,\max\{a,g(x)\}\}.$$
The elliptic regularity on polygonal domains
\cite{Blum:1980:plates,Grisvard:1992:Singularities} implies that $\phi \in
H^{2+s}(\Omega)$ for some $s\in (0,1]$ which depends on the
interior angles of the domain $\Omega$. Then
\begin{align*}
q|_{\Gamma_i}\in H^{1/2+s}(\Gamma_i)\quad \text{ for all } 1\leq
i\leq m,
\end{align*}
since $(\partial \phi/\partial n)|_{\Gamma_i}\in
H^{1/2+s}(\Gamma_i)$ for $1\leq i\leq m$. Using this, we also get
that $u\in H^{2+s}(\Omega)$.

\smallskip
\noindent Thus we have proved the following theorem:

\begin{theorem}
Let $s\in (0,1]$ be the elliptic regularity index. Then there
holds
\begin{align*}
\|q-q_h\|_{0,\Gamma}+\|u-u_h\|_h+\|\phi-\phi_h\|_h
& \leq
C h^s \left(\|u\|_{2+s,\Omega}+\|\phi\|_{2+s,\Omega}+ h\|u-u_d\|_{0,\Omega}\right)\\
&\quad + C h^s \left(h^{1-s}\|f\|_{-1,\Omega}+h^{1/2}
\sum_{i=1}^m\|q\|_{s, \Gamma_i}\right).
\end{align*}
\end{theorem}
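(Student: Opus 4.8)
The plan is to obtain the theorem as a corollary of the combined error bound displayed immediately before its statement, whose right-hand side consists of two interior best-approximation terms for $u$ and $\phi$ measured in the broken energy norm $\|\cdot\|_h$, two data terms carrying $\|f\|_{-1,\Omega}$ and $\|u-u_d\|_{0,\Omega}$, and three boundary terms built from the $L_2(\Gamma)$-projection $\Pi_h$ and the best approximation of $q$ in $Q_h$. Since the well-posedness together with the elliptic-regularity discussion already furnishes $u,\phi\in H^{2+s}(\Omega)$ and $q|_{\Gamma_i}\in H^{1/2+s}(\Gamma_i)$, each summand can be converted into an explicit power of $h$ by invoking the standard approximation estimates of the $C^0$ interior penalty ($\mathbb{P}_2$) space $V_h$ and of the piecewise-constant control space $Q_h$; the remaining work is then to collect the resulting powers of $h$ using $h\le1$ and $s\le1$.

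For the two interior terms I would invoke the fractional approximation property of the $C^0$ IP interpolant, namely $\inf_{v_h\in V_h}\|w-v_h\|_h\le C h^{s}\|w\|_{2+s,\Omega}$ for $w\in H^{2+s}(\Omega)$, where the jump contributions to $\|\cdot\|_h$ are controlled by the element-wise interpolation error together with the trace-inverse inequality already recorded. Taking $w=u$ and $w=\phi$ yields the terms $C h^{s}\|u\|_{2+s,\Omega}$ and $C h^{s}\|\phi\|_{2+s,\Omega}$. The data terms are immediate: $h\|f\|_{-1,\Omega}=h^{s}\cdot h^{1-s}\|f\|_{-1,\Omega}$, and $h^{2}\|u-u_d\|_{0,\Omega}\le h^{s}\cdot h\,\|u-u_d\|_{0,\Omega}$ because $h^{2}\le h^{s+1}$ for $h\le1$ and $s\le1$.

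The boundary terms require the most care and are where I expect the main obstacle to lie. I would use the projection estimate $\|g-\Pi_h g\|_{0,\Gamma}\le C h^{s}\|g\|_{s,\Gamma}$ (and the analogous best-approximation bound against $Q_h$) valid for $0<s\le1$ on piecewise constants. For $g=\partial\phi/\partial n$ the trace bound $\|\partial\phi/\partial n\|_{s,\Gamma_i}\le C\|\phi\|_{s+3/2,\Omega}\le C\|\phi\|_{2+s,\Omega}$ absorbs that term into $C h^{s}\|\phi\|_{2+s,\Omega}$. For the control I would exploit the optimality representation $q=\Pi_{[\underline q,\overline q]}(-\alpha^{-1}\partial\phi/\partial n)$: since the pointwise truncation $\Pi_{[a,b]}$ is Lipschitz and bounded on $H^{s}(\Gamma_i)$ for $0\le s\le1$, one has $\|q\|_{s,\Gamma_i}\le C\|\partial\phi/\partial n\|_{s,\Gamma_i}\le C\|\phi\|_{2+s,\Omega}$, so that the unweighted term $\|q-\Pi_hq\|_{0,\Gamma}\le C h^{s}\|q\|_{s,\Gamma}$ again folds into the $\phi$ contribution, while the $h^{1/2}$-weighted best-approximation term produces the explicitly retained $C h^{s+1/2}\sum_{i}\|q\|_{s,\Gamma_i}$. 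The delicate points are precisely the correct tracking of the fractional Sobolev indices through the trace theorem, the justification that the box truncation preserves $H^{s}$ regularity (which holds exactly in the range $s\le1$ of the regularity index), and the verification of the $P_0$ projection rate segment-by-segment on each $\Gamma_i$; once these are in place, assembling the terms and factoring out $h^{s}$ completes the proof.
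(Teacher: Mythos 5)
Your proposal is correct and follows essentially the same route as the paper, which obtains the theorem simply by inserting the regularity $u,\phi\in H^{2+s}(\Omega)$, $q|_{\Gamma_i}\in H^{1/2+s}(\Gamma_i)$ (via $q=\Pi_{[\underline q,\,\overline q]}(-\alpha^{-1}\partial\phi/\partial n)$) into the combined estimate displayed just before the statement and invoking standard interpolation and $L_2$-projection bounds. Your write-up in fact supplies more detail than the paper does on the boundary terms (the trace index bookkeeping, the $H^s$-stability of the box truncation, and the segment-wise $P_0$ projection rate), all of which is consistent with what the paper leaves implicit.
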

\par
\noindent Define the estimators,
\begin{align*}
\eta_u^2&=\sum_{T\in\cT_h}h_T^2\|f\|_{0,T}^2 +\sum_{e\in\cE_h^i}\int_e\left(h_e\sjump{\partial^2 u_h/\partial n^2}^2+h_e^{-1}\sjump{\nabla u_h}^2\right)ds\\
&\quad +\sum_{e\in\cE_h^b}\int_e h_e\left(\partial^2 u_h/\partial n^2-q_h\right)^2 ds,\\
\text{and}&\\
\eta_\phi^2 &=\sum_{T\in\cT_h}h_T^2\|u_h-u_d\|_{0,T}^2 +\sum_{e\in\cE_h^i}\int_e\left(h_e\sjump{\partial^2 \phi_h/\partial n^2}^2+h_e^{-1}\sjump{\nabla \phi_h}^2\right)ds\\
&\quad +\sum_{e\in\cE_h^b}\int_e h_e\left(\partial^2 \phi_h/\partial n^2\right)^2 ds.
\end{align*}
Again the error analysis in \cite{BNeilan:2010:C0IPSingular} and
the a posteriori error analysis in  \cite{BGS:2010:AC0IP} conclude
the following error estimates:
\begin{align*}
\|Ru-u_h\|_h & \leq C \eta_u, \\
\|R\phi-\phi_h \|_h& \leq C \eta_\phi.
\end{align*}

The following theorem is the consequence of the above two estimates, Theorem  \ref{thm:Err-up-Apost} and Theorem \ref{thm:Apost-up}:
\begin{theorem} There hold
\begin{align*}
\|q-q_h\|_{0,\Gamma}+\|u-u_h\|_h+\|\phi-\phi_h\|_h &\leq C
\left(\eta_u+\eta_\phi\right)\\
&\quad  + C \|(\partial\phi_h/\partial n+\alpha
q_h)-\Pi_h(\partial\phi_h/\partial n+\alpha q_h)\|_{0,\Gamma}.
\end{align*}
\end{theorem}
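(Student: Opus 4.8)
The plan is to obtain the claimed bound as a direct superposition of the two abstract a posteriori estimates (Theorem \ref{thm:Err-up-Apost} and Theorem \ref{thm:Apost-up}) together with the two elementwise residual bounds $\|Ru-u_h\|_h\leq C\eta_u$ and $\|R\phi-\phi_h\|_h\leq C\eta_\phi$ just recorded. No fresh analysis is required; all the work lies in matching the abstract quantities to their concrete forms in the boundary control setting.

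First I would recall the identifications specific to Model Problem 1: here $Q=L_2(\Gamma)$ so that $\|\cdot\|_Q=\|\cdot\|_{0,\Gamma}$, $W=L_2(\Omega)$, and $B_h$ is the edge-wise normal derivative, whence $B_h\phi_h=\partial\phi_h/\partial n$. Consequently the consistency term $\|(B_h\phi_h+\alpha q_h)-\Pi_h(B_h\phi_h+\alpha q_h)\|_Q$ appearing in both abstract theorems is exactly $\|(\partial\phi_h/\partial n+\alpha q_h)-\Pi_h(\partial\phi_h/\partial n+\alpha q_h)\|_{0,\Gamma}$, which is precisely the last term on the right-hand side of the target inequality.

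Next I would treat the control error. Theorem \ref{thm:Err-up-Apost} bounds $\|q-q_h\|_Q$ by $C(\|Ru-u_h\|_W+\|R\phi-\phi_h\|_h)$ plus the consistency term. Since the $W$-norm is dominated by the $h$-norm through the Poincar\'e-type inequality \eqref{eq:Poincare-type}, I would estimate $\|Ru-u_h\|_W\leq C\|Ru-u_h\|_h\leq C\eta_u$ and use $\|R\phi-\phi_h\|_h\leq C\eta_\phi$, giving $\|q-q_h\|_{0,\Gamma}\leq C(\eta_u+\eta_\phi)$ plus the consistency term. For the state and adjoint errors I would apply Theorem \ref{thm:Apost-up} directly; its right-hand side already uses the $h$-norm residuals $\|Ru-u_h\|_h$ and $\|R\phi-\phi_h\|_h$, so substituting the $\eta$ bounds at once yields $\|u-u_h\|_h+\|\phi-\phi_h\|_h\leq C(\eta_u+\eta_\phi)$ plus the consistency term. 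Adding the two estimates completes the proof.

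There is essentially no obstacle here, as the argument reduces to a substitution. The one point that requires a moment of care is that Theorem \ref{thm:Err-up-Apost} is stated with the factor $\|Ru-u_h\|_W$ whereas the residual estimate is available only in the stronger $h$-norm; this mismatch is harmless precisely because of \eqref{eq:Poincare-type}, which bounds the $W$-norm by the $h$-norm on $V+V_h$, so that the $\eta_u$ bound transfers without loss.
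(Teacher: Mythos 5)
Your proposal is correct and follows exactly the route the paper intends: the paper states this theorem as an immediate consequence of Theorem \ref{thm:Err-up-Apost}, Theorem \ref{thm:Apost-up}, and the residual bounds $\|Ru-u_h\|_h\leq C\eta_u$, $\|R\phi-\phi_h\|_h\leq C\eta_\phi$, with the abstract quantities specialized to $Q=L_2(\Gamma)$ and $B_h\phi_h=\partial\phi_h/\partial n$. Your explicit remark that the $W$-norm in Theorem \ref{thm:Err-up-Apost} is absorbed into the $h$-norm via \eqref{eq:Poincare-type} is a detail the paper leaves implicit, but it is the right justification.
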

\subsection{Discrete Distributed Control Problem}
The model problem in this subsection is the Model Problem 2
introduced in Section \ref{sec:Model-Problem}. Set
$V=H^2(\Omega)\cap H^1_0(\Omega)$, $W=L_2(\Omega)$ and
$Q=L_2(\Omega)$. The set $Q_{ad}=Q_d$, where $Q_d$ is defined in
Section \ref{sec:Model-Problem}. The discrete set $V_h$ is the
same as in \eqref{eq:Vh-def}. Set $Q_{ad}=Q_b$, where $Q_b$ is
defined in Section \ref{sec:Model-Problem}. Define the discrete
space $Q_h=\{p_h\in L_2(\Omega): p_h|_{T}\in P_0(T),\; \forall
T\in\cT_h\}$ and the admissible set $Q^h_{ad}=\{p_h\in Q_h:
\underline q\leq p_h\leq \overline q\}$. It is trivial to check
that $Q_{ad}^h\subset Q_{ad}$ and $\Pi_hq\in Q_{ad}^h$ for $q\in
Q_{ad}$. The operator $B:V\rightarrow Q$ and $B_h :V_h\rightarrow
Q_h$ are inclusion (identity) maps. The assumptions
\eqref{eq:Poincare-type} and \eqref{eq:trace-QV} are the
Poincar\'e type inequalities derived in \cite{BWZ:2004:Poincare2}.

The error analysis in \cite{BNeilan:2010:C0IPSingular} implies
the following error estimates for the projections $P_hu$ and $P_h\phi$:
\begin{align*}
\|u-P_hu\|_h &\leq C\left( \inf_{v_h\in V_h}\|u-v_h\|_h+
h\|f\|_{-1,\Omega}+h^{2} \inf_{p_h\in
Q_h}\|q-p_h\|_{0,\Omega}\right),\\
\|\phi-P_h\phi\|_h &\leq C\left( \inf_{v_h \in V_h}\|\phi-v_h\|_h+
h^2\|u-u_d\|_{0,\Omega}\right).
\end{align*}
Using Theorem \ref{thm:Err-q}, Theorem \ref{thm:Err-u} and above estimates, we find
\begin{align*}
\|q-q_h\|_{0,\Omega}&+\|u-u_h\|_h+\|\phi-\phi_h\|_h \\
& \leq
C\left( \inf_{v_h \in V_h}\|\phi-v_h\|_h+\inf_{v_h \in
V_h}\|u-v_h\|_h+ h^2\|u-u_d\|_{0,\Omega} +
h\|f\|_{-1,\Omega}\right.\\&\quad\left.+\|\phi-\Pi_h\phi\|_{0,\Omega}+\|q-\Pi_hq\|_{0,\Omega}+h^{2} \inf_{p_h\in
Q_h}\|q-p_h\|_{0,\Omega}\right).
\end{align*}

\par
\noindent We invoke the elliptic regularity now to derive the
concrete error estimates. Note that
$$q=\Pi_{[\underline q,\,\overline
q]}\left(-\frac{1}{\alpha}\phi\right).$$
By the elliptic regularity \cite{Grisvard:1992:Singularities},
there is some $s\in (0,1]$ which depends on the interior angles of
the domain  $\Omega$ such that $u,\,\phi \in H^{2+s}(\Omega)$ and
hence $q\in W^{1,\infty}(\Omega)$.

Thus we deduce the following theorem as in the case of boundary control problem.

\begin{theorem}
Let $s\in (0,1]$ be the elliptic regularity index. Then there
holds
\begin{align*}
\|q-q_h\|_{0,\Omega}+\|u-u_h\|_h+\|\phi-\phi_h\|_h
& \leq
C h^s \left(\|u\|_{2+s,\Omega}+\|\phi\|_{2+s,\Omega}+ \|u-u_d\|_{0,\Omega}\right)\\
&\quad + C h \left( \|f\|_{-1,\Omega}+h^2 \|q\|_{1,\Omega}\right).
\end{align*}
\end{theorem}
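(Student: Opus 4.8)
The plan is to start from the consolidated a priori estimate displayed immediately above the statement and bound each of its seven terms using the elliptic regularity recorded just before the theorem, namely $u,\phi\in H^{2+s}(\Omega)$ and $q\in W^{1,\infty}(\Omega)$. The two best-approximation terms $\inf_{v_h\in V_h}\|u-v_h\|_h$ and $\inf_{v_h\in V_h}\|\phi-v_h\|_h$ I would control by the interpolation estimate for the $C^0$ $\bbP_2$ space in the broken norm $\|\cdot\|_h$, established in \cite{BSung:2005:DG4,BNeilan:2010:C0IPSingular}: for $w\in V\cap H^{2+s}(\Omega)$ one has $\inf_{v_h\in V_h}\|w-v_h\|_h\le C h^{s}\|w\|_{2+s,\Omega}$. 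Applied to $w=u$ and $w=\phi$, this produces the leading $C h^{s}\big(\|u\|_{2+s,\Omega}+\|\phi\|_{2+s,\Omega}\big)$ contribution.

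For the remaining terms I would use that $\Pi_h$ is the $L_2(\Omega)$-orthogonal projection onto the piecewise constants $Q_h$, so that $\|w-\Pi_h w\|_{0,\Omega}\le C h\,|w|_{1,\Omega}$ for every $w\in H^1(\Omega)$ and $\inf_{p_h\in Q_h}\|q-p_h\|_{0,\Omega}=\|q-\Pi_h q\|_{0,\Omega}$. The factor $h^2$ in front of $\inf_{p_h\in Q_h}\|q-p_h\|_{0,\Omega}$ then yields $C h^{2}\cdot h\,\|q\|_{1,\Omega}=C h\,(h^{2}\|q\|_{1,\Omega})$, which is precisely the last term of the claim. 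To fold the two $L_2$-projection terms $\|\phi-\Pi_h\phi\|_{0,\Omega}$ and $\|q-\Pi_h q\|_{0,\Omega}$ into the leading $h^{s}$ group, I would transfer the regularity of $\phi$ to $q$ via the pointwise formula $q=\Pi_{[\underline q,\overline q]}(-\phi/\alpha)$: since the truncation $\Pi_{[\underline q,\overline q]}$ is $1$-Lipschitz, $|q|_{1,\Omega}\le \tfrac1\alpha|\phi|_{1,\Omega}\le C\|\phi\|_{2+s,\Omega}$, so both $\|\phi-\Pi_h\phi\|_{0,\Omega}$ and $\|q-\Pi_h q\|_{0,\Omega}$ are $\le C h\,\|\phi\|_{2+s,\Omega}\le C h^{s}\|\phi\|_{2+s,\Omega}$, the last inequality using $h\le h^{s}$ (valid since $0<s\le 1$ and $h\le 1$). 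The same inequality $h^{2}\le h^{s}$ converts the term $h^{2}\|u-u_d\|_{0,\Omega}$ into $C h^{s}\|u-u_d\|_{0,\Omega}$, while $h\|f\|_{-1,\Omega}$ is kept unchanged. Collecting these bounds gives the asserted estimate.

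Everything reduces to cited or elementary facts; the one step that needs genuine care is the broken-norm interpolation estimate, because $\|\cdot\|_h$ carries the jump seminorm $\sum_{e\in\cE_h^i}\frac{\eta}{h_e}\|\jump{\nabla(\cdot)}\|_{0,e}^2$. Since $w\in H^{2+s}(\Omega)$ has a single-valued gradient trace, $\jump{\nabla w}=0$, and the required $O(h^{s})$ control of $\big(\sum_{e\in\cE_h^i}\frac{1}{h_e}\|\jump{\nabla(w-v_h)}\|_{0,e}^2\big)^{1/2}$ follows by combining a scaled trace inequality on each element with the $H^{1+s}$- and $H^{2}$-seminorm interpolation errors of the $\bbP_2$ interpolant; this is exactly the estimate proved in \cite{BNeilan:2010:C0IPSingular}, which I would invoke rather than reprove. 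The only other point to verify is that $q$ genuinely lies in $H^{1}(\Omega)$ so that the $L_2$-projection bound applies: this holds because $\nabla\phi\in H^{1+s}(\Omega)\hookrightarrow L^{\infty}(\Omega)$ for $s>0$ in two dimensions and the $1$-Lipschitz truncation preserves this Sobolev regularity, giving in fact $q\in W^{1,\infty}(\Omega)$.
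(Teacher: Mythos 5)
Your proposal is correct and follows essentially the same route as the paper: the paper itself gives no separate written proof, but deduces the theorem exactly as you do, by inserting the standard interpolation estimate $\inf_{v_h\in V_h}\|w-v_h\|_h\leq Ch^{s}\|w\|_{2+s,\Omega}$ and the piecewise-constant $L_2$-projection bound into the consolidated estimate displayed just above the statement, using the regularity $u,\phi\in H^{2+s}(\Omega)$ and $q=\Pi_{[\underline q,\overline q]}(-\phi/\alpha)\in W^{1,\infty}(\Omega)$. Your extra care about the jump seminorm in $\|\cdot\|_h$ and the Lipschitz truncation preserving $H^1$ regularity is exactly the content the paper delegates to \cite{BNeilan:2010:C0IPSingular} and to the remark preceding the theorem.
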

Define the estimators,
\begin{align*}
\eta_u^2&=\sum_{T\in\cT_h}h_T^2\|f+q_h\|_{0,T}^2 +\sum_{e\in\cE_h^i}\int_e\left(h_e\sjump{\partial^2 u_h/\partial n^2}^2+h_e^{-1}\sjump{\nabla u_h}^2\right)ds\\
&\quad +\sum_{e\in\cE_h^b}\int_e h_e\left(\partial^2 u_h/\partial n^2\right)^2 ds,\\
\text{and}&\\
\eta_\phi^2 &=\sum_{T\in\cT_h}h_T^2\|u_h-u_d\|_{0,T}^2 +\sum_{e\in\cE_h^i}\int_e\left(h_e\sjump{\partial^2 \phi_h/\partial n^2}^2+h_e^{-1}\sjump{\nabla \phi_h}^2\right)ds\\
&\quad +\sum_{e\in\cE_h^b}\int_e h_e\left(\partial^2 \phi_h/\partial n^2\right)^2 ds.
\end{align*}

As in the case of boundary control problem, the following theorem
on a posteriori error estimates is a consequence of the results in
\cite{BNeilan:2010:C0IPSingular,BGS:2010:AC0IP}, Theorem
\ref{thm:Err-up-Apost} and Theorem \ref{thm:Apost-up}:
\begin{theorem} There hold
\begin{align*}
\|q-q_h\|_{0,\Omega}+\|u-u_h\|_h+\|\phi-\phi_h\|_h\leq C \left(\eta_u+\eta_\phi+
\|(\phi_h+\alpha q_h)-\Pi_h(\phi_h+\alpha q_h)\|_{0,\Omega}\right).
\end{align*}
\end{theorem}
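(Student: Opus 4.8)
The plan is to specialize the abstract a posteriori bounds to the present distributed setting and then insert the residual estimates quoted above. First I would record the specializations dictated by this subsection: here $W=L_2(\O)$, $Q=L_2(\O)$, and both $B$ and $B_h$ are the inclusion (identity) maps, so $B_h\phi_h=\phi_h$. Consequently the abstract data-oscillation term $\|(B_h\phi_h+\alpha q_h)-\Pi_h(B_h\phi_h+\alpha q_h)\|_Q$ is exactly the computable quantity $\|(\phi_h+\alpha q_h)-\Pi_h(\phi_h+\alpha q_h)\|_{0,\O}$ appearing on the right of the claim. This identifies the last summand and reduces the task to controlling the reconstruction differences $\|Ru-u_h\|_h$ and $\|R\phi-\phi_h\|_h$.

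Next I would simply add the two abstract estimates already proved. Theorem \ref{thm:Err-up-Apost} gives
\begin{align*}
\|q-q_h\|_{0,\O}+\|u-Ru\|_W\leq C\left(\|Ru-u_h\|_W+\|R\phi-\phi_h\|_h\right)+C\,\|(\phi_h+\alpha q_h)-\Pi_h(\phi_h+\alpha q_h)\|_{0,\O},
\end{align*}
while Theorem \ref{thm:Apost-up} gives
\begin{align*}
\|u-u_h\|_h+\|\phi-\phi_h\|_h\leq C\left(\|Ru-u_h\|_h+\|R\phi-\phi_h\|_h\right)+C\,\|(\phi_h+\alpha q_h)-\Pi_h(\phi_h+\alpha q_h)\|_{0,\O}.
\end{align*}
Adding these, the left-hand side dominates $\|q-q_h\|_{0,\O}+\|u-u_h\|_h+\|\phi-\phi_h\|_h$ (the extra nonnegative term $\|u-Ru\|_W$ is simply discarded), and the two right-hand sides share the same structure.

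It then remains to replace the reconstruction differences by the estimators. The conforming problems \eqref{eq:Ru}--\eqref{eq:Rp} are driven by the discrete data (with element residuals $f+q_h$ and $u_h-u_d$), and the reliability estimates borrowed from \cite{BNeilan:2010:C0IPSingular,BGS:2010:AC0IP} read $\|Ru-u_h\|_h\le C\eta_u$ and $\|R\phi-\phi_h\|_h\le C\eta_\phi$, exactly as in the boundary-control subsection. To reconcile the $W$-norm appearing in Theorem \ref{thm:Err-up-Apost} with the energy norm of these estimates, I would apply the Poincar\'e-type inequality \eqref{eq:Poincare-type}---here the inequality of \cite{BWZ:2004:Poincare2}---to bound $\|Ru-u_h\|_W\le C\|Ru-u_h\|_h\le C\eta_u$. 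Substituting everything into the summed inequality and absorbing constants yields the stated bound.

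The assembly above is routine; the step I expect to carry the real content is the reliability pair $\|Ru-u_h\|_h\le C\eta_u$, $\|R\phi-\phi_h\|_h\le C\eta_\phi$. This is the reliability half of the $C^0$ interior penalty a posteriori theory of \cite{BNeilan:2010:C0IPSingular,BGS:2010:AC0IP}, resting on the enriching map of Lemma \ref{lem:EnrichApprx} and on the element, interior-edge jump, and boundary-edge terms of $\eta_u,\eta_\phi$ being precisely the residual functionals of \eqref{eq:Ru}--\eqref{eq:Rp}. Granting that correspondence, no further estimation is needed here.
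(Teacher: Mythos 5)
Your proposal is correct and follows the same route as the paper: the paper derives this theorem exactly as "a consequence of" the reliability bounds $\|Ru-u_h\|_h\le C\eta_u$, $\|R\phi-\phi_h\|_h\le C\eta_\phi$ from \cite{BNeilan:2010:C0IPSingular,BGS:2010:AC0IP} together with Theorem \ref{thm:Err-up-Apost} and Theorem \ref{thm:Apost-up}, with $B_h$ the identity so the oscillation term specializes as you note. Your additional remark about using \eqref{eq:Poincare-type} to pass from $\|Ru-u_h\|_W$ to $\|Ru-u_h\|_h$ is a detail the paper leaves implicit, but it is the right way to reconcile the norms.
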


\par
\noindent {\bf Discussion on the efficiency estimates:}
From the equations \eqref{eq:Ru}-\eqref{eq:Rp}, the continuity of $B:V\rightarrow Q$ and the continuous imbedding of $W\subset V$,
we find
\begin{align*}
\|u-Ru\|_V &\leq C \|q-q_h\|_Q,\\
\|\phi-R\phi\|_V&\leq C \|q-q_h\|_Q.
\end{align*}
Then by the triangle inequality,
\begin{align*}
\|Ru-u_h\|_h &\leq C \left(\|Ru-u\|_V +\|u-u_h\|_h\right)\leq C  \left(\|q-q_h\|_Q+\|u-u_h\|_h\right),\\
\|R\phi-\phi_h\|_h&\leq C \left(\|R\phi-\phi\|_V+\|\phi-\phi_h\|_h\right)\leq C \left(\|q-q_h\|_Q+\|\phi-\phi_h\|_h\right).
\end{align*}
Therefore the efficiency of the terms in $\eta_u$ and $\eta_\phi$
follows by the standard bubble function techniques. Then note, for
example, in the case of distributed control that by the triangle
inequality and the stability of the projection $\Pi_h$ that
\begin{align*}
\|(\phi_h+\alpha q_h)-\Pi_h(\phi_h+\alpha q_h)\|_{0,T} &\leq
\|(\phi_h+\alpha q_h)-(\phi+\alpha q)\|_{0,T}+\|(\phi+\alpha
q)-\Pi_h(\phi+\alpha q)\|_{0,T}\\
&\quad +\|\Pi_h(\phi+\alpha q)-\Pi_h(\phi_h+\alpha q_h)\|_{0,T}\\
&\leq C \|(\phi_h+\alpha q_h)-(\phi+\alpha q)\|_{0,T}\\&\quad
+\|(\phi+\alpha q)-\Pi_h(\phi+\alpha q)\|_{0,T}.
\end{align*}
This completes the discussion on the efficiency of the error estimates.

\section{Numerical Experiments}\label{sec:Numerics}
In this section, we present some numerical experiments to
illustrate the theoretical results derived in the article. In all
the examples below, we choose the penalty parameter $\eta =10,$ $\underline{q}=-750$, $\overline{q}=-50$
and $\alpha=10^{-3}$. The discrete solution is computed by using the primal-dual active set
algorithm in \cite{trolzstch:2005:Book}.

{\bf Example 1}. First, we test the a priori error
estimates for a model of distributed optimal control problem with
homogeneous simply supported plate boundary conditions. The
computational domain is chosen to be $\Omega=(0,1)^2$. The data of
the model problem is constructed in such a way that the exact
solution is known. This is done  by choosing the state variable
$u$, the adjoint variable $\phi$ as
\begin{equation*}
u(x,y)=\phi(x,y)=\sin^3(\pi x) \sin^3(\pi y),
\end{equation*}
and the control $q$ as $q(x)= \Pi_{[-750, -50]}\left( -
\frac{1}{\alpha} \phi(x) \right)$.
The source term $f$ and the observation $u_d$ are then computed by
using
\begin{equation*}
f=\Delta^2 u-q, \quad u_d=u-\Delta^2 \phi.
\end{equation*}

\par
\noindent We take a sequence of uniformly refined meshes with mesh
parameter $h$ as it is shown in Table \ref{table:H2Y}.  The exact
errors and orders of convergence have been computed and shown in
the Table \ref{table:H2Y}. The results clearly predicts the linear
rate of convergence as it is expected.

\begin{table}[h!!]
 {\small{\footnotesize
\begin{center}
\begin{tabular}{|c|c|c|c|c|c|c|c|}\hline
 $h$ & $\|u-u_h\|_h $  & order &  $\|\phi-\phi_h\|_h$ & order & $\|q-q_h\|_{0,\Omega}$ & order\\
\hline\\[-12pt]
 1/4   & 11.7524   &  --     &  18.0932  &   --     & 264.3128 &  --  \\
 1/8   & 6.5598    & 0.8412  &  6.5644   &   1.4627 & 58.6117  &   2.1730     \\
 1/16  & 3.3721    & 0.9600  &  3.3808   &   0.9573 &  29.7993 &  0.9759  \\
 1/32  & 1.6701    & 1.0137  &  1.6719   &   1.0158 &  14.6533 &   1.0241   \\
 1/64  & 0.8286    &  1.0112 &  0.8289   &   1.0123 &  7.3130  &   1.0027  \\
 1/128 & 0.4133    &  1.0037 &  0.4133   &   1.0040 & 3.6581   &  0.9994  \\
\hline
\end{tabular}
\end{center}
}}
\par\medskip
\caption{Errors and orders of convergence for Example 1 }
\label{table:H2Y}
\end{table}

Now, we test the performance of the a
posteriori error estimator for the above distributed control problem. Note that the state and adjoint state are smooth but not the control.

\par
\noindent The following algorithm for adaptive refinement has been
used:
\begin{center}%\color{red}
{SOLVE $\rightarrow$ ESTIMATE $\rightarrow$ MARK $\rightarrow$
REFINE }
\end{center}
We compute the discrete solutions and then we compute the
error estimator and mark the elements using the D\"orlfer marking
technique \cite{Dorlfer:1996:Marking} with $\theta=0.3$. We refine
the marked elements using the newest vertex bisection algorithm
and obtain a new mesh. Figure \ref{fig:err1} shows the behavior of
the estimator and the errors $\|u-u_h\|_h, \|\phi-\phi_h\|_h$ and
$\|q-q_h\|_{0,\Omega}$ with the increasing number of degrees of
freedom $N$ (number of unknowns for state variable). We observe
that the estimator is reliable. The errors in state, adjoint state
and control converge at the optimal rate of $1/\sqrt{N}$.  The
efficiency of the estimator is depicted through the efficiency
indices
$\left(\text{estimator}/(\|u-u_h\|_h+\|\phi-\phi_h\|_h+\|q-q_h\|_{0,\Omega})\right)$
in Figure \ref{fig:Eff}. Finally, Figure \ref{fig:MeshL} shows the
adaptive mesh refinement.

%%%%%%%%%%%%%%%%%%%%%%%%%%%%%%%%%%%%%%%%%%%%%%%%%%%%%%%
\begin{figure}[!ht]
\begin{center}
\includegraphics[width=9cm,height=7cm]{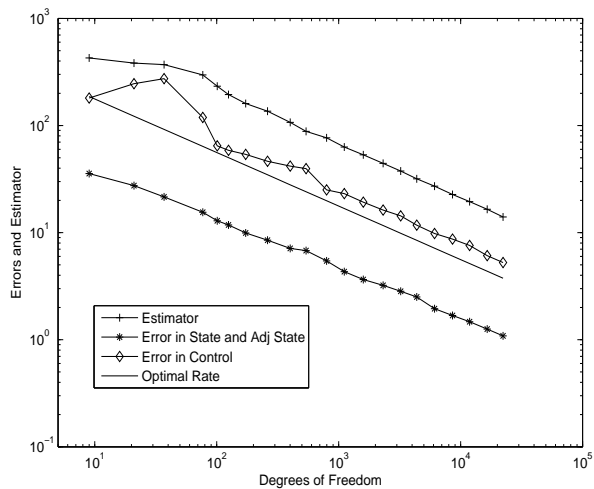}
\caption{Errors and estimator for Example 1 } \label{fig:err1}
\end{center}
\end{figure}
%%%%%%%%%%%%%%%%%%%%%%%%%%%%%%%%%%%%%%%%%%%%%%%%%%%%%%%%%%
\begin{figure}[!ht]
\begin{center}
\includegraphics[width=9cm,height=7cm]{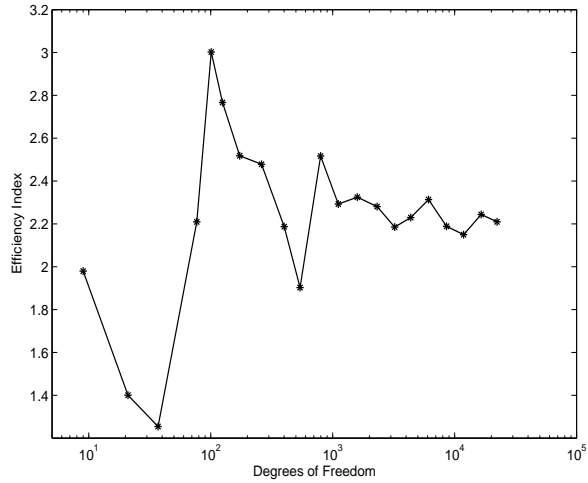}
\caption{Efficiency Index for Example 1 }\label{fig:Eff}
\end{center}
\end{figure}
%%%%%%%%%%%%%%%%%%%%%%%%%%%%%%%%%%%%%%%%%%%%%%%%%%%%%%%%%%
\begin{figure}[!ht]
\begin{center}
\includegraphics[width=9cm,height=7cm]{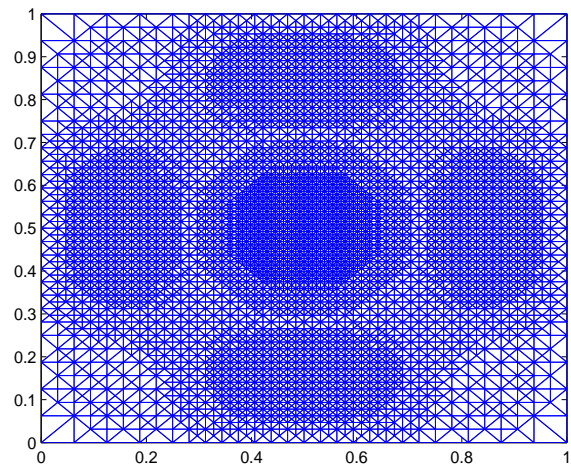}
\caption{Adaptive mesh refinement for Example 1 }\label{fig:MeshL}
\end{center}
\end{figure}

%%%%%%%%%%%%%%%%%%%%%%%%%%%%%%%%%%%%%%%%%%%%%%%%%%%%%%%%%%

{\bf Example 2.} In this example, we test the performance of the a
posteriori error estimator for a distributed control problem in
the presence of re-entrant corners. The domain $\Omega$ is set to
be $L-$shaped as it is shown in Figure \ref{fig:mesh2}. We set the
source term $f=1$ and the observation  $u_d=1$. In this case since
we do not have exact solutions at hand,  we test the optimal
convergence of the error estimator and its performance in
capturing the re-entrant corner.  The numerical experiment shows
that the error estimator converges optimally (see Figure
\ref{fig:err2}) and refines the mesh locally at the reentrant
corner (see Figure \ref{fig:mesh2}) as it should be expected.

%%%%%%%%%%%%%%%%%%%%%%%%%%%%%%%%%%%%%%%%%%%%%%%%%%%%%%%
\begin{figure}[!ht]
\begin{center}
\includegraphics[width=9cm,height=7cm]{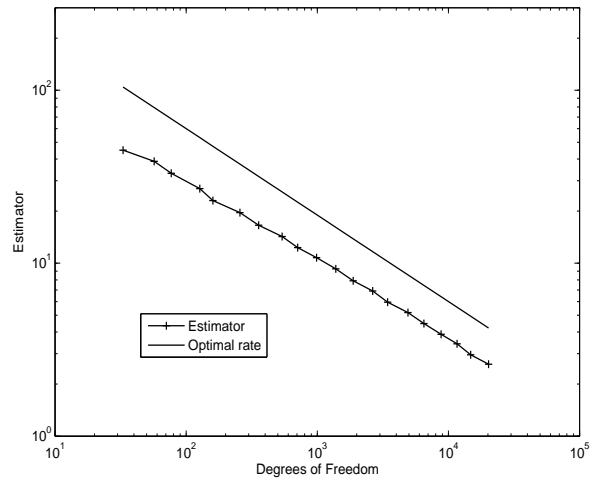}
\caption{Errors and estimator for Example 2 } \label{fig:err2}
\end{center}
\end{figure}
%%%%%%%%%%%%%%%%%%%%%%%%%%%%%%%%%%%%%%%%%%%%%%%%%%%%%%%%%%
\begin{figure}[!ht]
\begin{center}
\includegraphics[width=9cm,height=7cm]{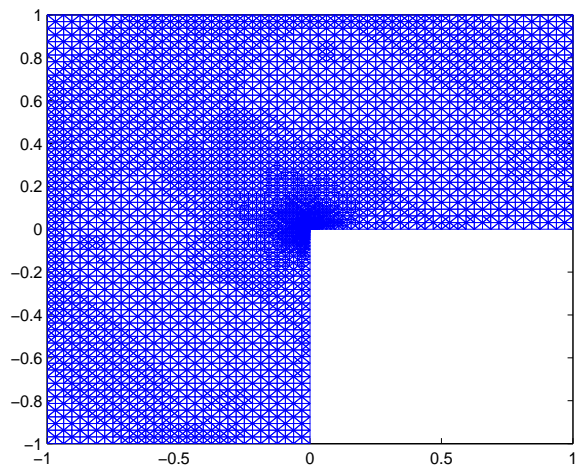}
\caption{Mesh refinement for Example 2 }\label{fig:mesh2}
\end{center}
\end{figure}
%%%%%%%%%%%%%%%%%%%%%%%%%%%%%%%%%%%%%%%%%%%%%%%%%%%%%%%%%%

\section{Variational discontinuous discretization method}\label{sec:VDM}
In this section, the variational discretization method (control
being not discretized) introduced in \cite{Hinze:2005:Control}
will be discussed in the context of discontinuous Galerkin methods
and then error estimates will be discussed. For this we use the
notation and the setting in Section \ref{sec:Abstract}. The
variational discontinuous discretization method is defined as to
find $(\tilde u_h,\tilde \phi_h,\tilde q)\in V_h\times V_h\times
Q_{ad}$ such that
\begin{align}
a_h(\tilde u_h,v_h)&=f(v_h)+\langle \tilde q,B_hv_h\rangle \quad \forall v_h\in V_h,\quad\label{N:eq:ADP-u}\\
a_h(v_h,\tilde\phi_h)&=(\tilde u_h-u_d,v_h)\quad \forall v_h\in V_h,\quad\label{N:eq:ADP-p}\\
\langle B_h\tilde\phi_h+\alpha \tilde q, p- \tilde q\rangle &\geq
0\quad \forall p\in Q_{ad}, \label{N:eq:ADP-q}
\end{align}

\par
\smallskip
\noindent The following lemma is proved in the same lines as that
of Lemma \ref{lem:Err-q-abstract}:

\begin{lemma}\label{N:lem:Err-q-abstract} There hold
\begin{align*}
\langle B_h(\tilde \phi_h-P_h\phi), q-\tilde q\rangle &\geq \alpha
\|q-\tilde q\|_Q^2+\langle B_h(\phi-P_h\phi), q-\tilde q\rangle.
\end{align*}
\end{lemma}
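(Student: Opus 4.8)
The plan is to mirror the proof of Lemma \ref{lem:Err-q-abstract}, but now exploiting the fact that in the variational discretization method the control is \emph{not} discretized, so both the continuous optimality condition \eqref{eq:AMP-q} and its discrete analogue \eqref{N:eq:ADP-q} are tested against the full admissible set $Q_{ad}$. Concretely, in \eqref{eq:AMP-q} I choose $p=\tilde q\in Q_{ad}$ and in \eqref{N:eq:ADP-q} I choose $p=q\in Q_{ad}$, obtaining the two inequalities
\begin{align*}
\langle B\phi+\alpha q,\,\tilde q-q\rangle&\geq 0,\\
\langle B_h\tilde\phi_h+\alpha\tilde q,\,q-\tilde q\rangle&\geq 0.
\end{align*}
Because the control is undiscretized, no projection $\Pi_h q$ or auxiliary element $p_h\in Q_{ad}^h$ is needed here, and consequently the term $\langle B_h\phi_h+\alpha q_h,\,q-p_h\rangle$ that appears in Lemma \ref{lem:Err-q-abstract} drops out entirely; this is the structural simplification that accounts for the shorter right-hand side in the statement.

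Next I would add the two inequalities. Using the hypothesis $B_h=B$ on $V$ (so that $B\phi=B_h\phi$ and $B\tilde q$-type terms align), the sum becomes
\begin{align*}
\langle B_h(\tilde\phi_h-\phi)+\alpha(\tilde q-q),\,q-\tilde q\rangle\geq 0,
\end{align*}
which rearranges to
\begin{align*}
\langle B_h(\tilde\phi_h-\phi),\,q-\tilde q\rangle\geq \alpha\|q-\tilde q\|_Q^2.
\end{align*}
Finally I would insert the projection $P_h\phi$ by writing $\tilde\phi_h-\phi=(\tilde\phi_h-P_h\phi)+(P_h\phi-\phi)$ and splitting the bracket by bilinearity, moving the $\langle B_h(P_h\phi-\phi),\,q-\tilde q\rangle$ term across to the right-hand side. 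This yields exactly
\begin{align*}
\langle B_h(\tilde\phi_h-P_h\phi),\,q-\tilde q\rangle\geq \alpha\|q-\tilde q\|_Q^2+\langle B_h(\phi-P_h\phi),\,q-\tilde q\rangle,
\end{align*}
as claimed.

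I do not anticipate a genuine obstacle: every step is an algebraic manipulation of two variational inequalities together with the bilinearity of $\langle\cdot,\cdot\rangle$ and the identity $B_h=B$ on $V$. The only point requiring care is the bookkeeping of signs when adding the two inequalities and when inserting $P_h\phi$, so that the inequality direction and the placement of the $\langle B_h(\phi-P_h\phi),\,q-\tilde q\rangle$ term come out correctly; this is precisely the computation already carried out in the proof of Lemma \ref{lem:Err-q-abstract}, with the admissible-set terms suppressed because $Q_{ad}$ is common to both the continuous and discrete problems.
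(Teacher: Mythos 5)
Your proof is correct and follows essentially the same route as the paper: test \eqref{eq:AMP-q} at $p=\tilde q$ and \eqref{N:eq:ADP-q} at $p=q$, add the two inequalities using $B_h=B$ on $V$, extract the $\alpha\|q-\tilde q\|_Q^2$ term, and split $\tilde\phi_h-\phi$ through $P_h\phi$. Your added remark about why the $\langle B_h\phi_h+\alpha q_h, q-p_h\rangle$ term disappears (the control is not discretized, so $q$ itself is an admissible test element) is exactly the structural point the paper exploits.
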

\begin{proof}
From \eqref{N:eq:ADP-q} and \eqref{eq:AMP-q}, we have
\begin{align*}
\langle B_h\tilde \phi_h+\alpha \tilde q, q-\tilde q\rangle
&\geq 0,\\
-\langle B_h\phi+\alpha q, q- \tilde q\rangle &\geq 0,
\end{align*}
and by adding them, we find
\begin{align*}
\langle B_h(\tilde \phi_h-\phi)+\alpha (\tilde q-q), q-\tilde
q\rangle &\geq 0.
\end{align*}
This implies
\begin{align*}
\langle B_h(\tilde \phi_h-P_h\phi), q-\tilde q\rangle &\geq \alpha
\|q-\tilde q\|_Q^2+\langle B_h(\phi-P_h\phi), q-\tilde q\rangle,
\end{align*}
and hence the proof is completed.
\end{proof}

The error estimate for the control and the state are derived in
the following theorem.

\begin{theorem}\label{N:thm:Err-q}
There hold
\begin{align*}
\|q-\tilde q\|_Q+\|u-\tilde u_h\|_W \leq C
\left(\|B_h(\phi-P_h\phi)\|_Q+\|u-P_hu\|_W\right).
\end{align*}
\end{theorem}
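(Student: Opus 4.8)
The plan is to follow the same strategy used in Theorem~\ref{thm:Err-q}, but exploiting the crucial simplification of the variational discretization method: since the control is \emph{not} discretized, the admissible set for the discrete problem is the full set $Q_{ad}$, and the true control $q$ is itself an admissible test function in \eqref{N:eq:ADP-q} while $\tilde q$ is admissible in \eqref{eq:AMP-q}. This is exactly what Lemma~\ref{N:lem:Err-q-abstract} captures, and it is why the clumsy $\langle B_h\phi_h+\alpha q_h,q-p_h\rangle$ term present in Lemma~\ref{lem:Err-q-abstract} has disappeared here. First I would establish the discrete error equations analogous to \eqref{eq:E11}--\eqref{eq:E12}: subtracting \eqref{eq:Phu}--\eqref{eq:Php} from \eqref{N:eq:ADP-u}--\eqref{N:eq:ADP-p} gives
\begin{align*}
a_h(P_hu-\tilde u_h,v_h)&=\langle q-\tilde q,B_hv_h\rangle \quad \forall v_h\in V_h,\\
a_h(v_h,P_h\phi-\tilde\phi_h)&=(u-\tilde u_h,v_h)\quad \forall v_h\in V_h.
\end{align*}

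Next I would test the first equation with $v_h=P_h\phi-\tilde\phi_h$ and the second with $v_h=P_hu-\tilde u_h$, then subtract to eliminate the bilinear form, obtaining
\begin{align*}
\langle q-\tilde q,B_h(\tilde\phi_h-P_h\phi)\rangle+\|P_hu-\tilde u_h\|_W^2=(u-P_hu,\tilde u_h-P_hu).
\end{align*}
Now I would substitute the estimate from Lemma~\ref{N:lem:Err-q-abstract} for the first term on the left. This converts the left-hand side into $\alpha\|q-\tilde q\|_Q^2+\langle B_h(\phi-P_h\phi),q-\tilde q\rangle+\|P_hu-\tilde u_h\|_W^2$, so that
\begin{align*}
\alpha\|q-\tilde q\|_Q^2+\|P_hu-\tilde u_h\|_W^2\leq -\langle B_h(\phi-P_h\phi),q-\tilde q\rangle+(u-P_hu,\tilde u_h-P_hu).
\end{align*}
The two terms on the right are then bounded by Cauchy--Schwarz and Young's inequality, absorbing $\tfrac{\alpha}{2}\|q-\tilde q\|_Q^2$ and $\tfrac12\|P_hu-\tilde u_h\|_W^2$ into the left, which yields control of $\|q-\tilde q\|_Q$ and $\|P_hu-\tilde u_h\|_W$ by $\|B_h(\phi-P_h\phi)\|_Q$ and $\|u-P_hu\|_W$.

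The one nontrivial ingredient—and the step I expect to be the main obstacle—is bounding $\|u-\tilde u_h\|_W$ rather than $\|P_hu-\tilde u_h\|_W$; for this I would invoke the triangle inequality $\|u-\tilde u_h\|_W\leq\|u-P_hu\|_W+\|P_hu-\tilde u_h\|_W$ together with the assumption \eqref{eq:Poincare-type} to pass from the $\|\cdot\|_h$ to the $\|\cdot\|_W$ norm where needed. I would also reuse the argument behind \eqref{eq:Eq2}: testing the adjoint error equation with $P_h\phi-\tilde\phi_h$ and using ellipticity of $a_h$ and \eqref{eq:Poincare-type} gives $\|P_h\phi-\tilde\phi_h\|_h\leq C\|u-\tilde u_h\|_W$, which closes any residual dependence on the adjoint error. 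Collecting the bounds and applying the triangle inequality once more delivers the stated estimate. The argument is essentially a streamlined version of the proof of Theorem~\ref{thm:Err-q}, and no genuinely new difficulty arises precisely because the non-discretized control removes the projection mismatch terms $\|B\phi-\Pi_h(B\phi)\|_Q$ and $\|q-\Pi_hq\|_Q$.
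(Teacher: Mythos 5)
Your proposal follows the paper's proof essentially verbatim: the same error equations, the same choice of test functions and subtraction to obtain the identity $\langle q-\tilde q,B_h(\tilde\phi_h-P_h\phi)\rangle+\|P_hu-\tilde u_h\|_W^2=(u-P_hu,\tilde u_h-P_hu)$, the same application of Lemma~\ref{N:lem:Err-q-abstract}, and the same Young/absorption step followed by the triangle inequality. The only difference is that you additionally sketch the adjoint bound $\|P_h\phi-\tilde\phi_h\|_h\leq C\|u-\tilde u_h\|_W$, which is not needed for this theorem (it belongs to Theorem~\ref{N:thm:Err-u}) but does no harm.
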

\begin{proof}
From \eqref{N:eq:ADP-u}-\eqref{N:eq:ADP-p} and the definition of
$P_h$, we have
\begin{align}
a_h(P_hu-\tilde u_h,v_h)&= \langle q-\tilde q,B_hv_h\rangle\quad \forall v_h\in V_h,\label{N:eq:E11}\\
a_h(v_h, P_h\phi-\tilde \phi_h)&=(u-\tilde u_h,v_h)\quad \forall
v_h\in V_h.\label{N:eq:E12}
\end{align}
Take $v_h=P_hu-\tilde u_h$ in \eqref{N:eq:E12},
$v_h=P_h\phi-\tilde \phi_h$ in \eqref{N:eq:E11} and subtract the
resulting equations to find
\begin{align*}
\langle q-\tilde q, B_h(P_h\phi-\tilde \phi_h)\rangle-(u-\tilde
u_h,P_hu-\tilde u_h)=0.
\end{align*}
This implies
\begin{align*}
\langle q-\tilde q,B_h(\tilde \phi_h-P_h\phi)\rangle+\|P_hu-\tilde
u_h\|_W^2=(u-P_hu,\tilde u_h-P_hu).
\end{align*}
Using Lemma \ref{N:lem:Err-q-abstract}, we find
\begin{align*}
\alpha \|q-\tilde q\|_Q^2+\|P_hu-\tilde u_h\|_W^2 &\leq
-\langle B_h(\phi-P_h\phi), q-\tilde q\rangle + (u-P_hu,\tilde u_h-P_hu)\notag \\
&\leq C \left(\|B_h(\phi-P_h\phi)\|_Q^2+\|u-P_hu\|_W^2\right)\notag\\
&\quad + \frac{1}{2}\|\tilde
u_h-P_hu\|_W^2+\frac{\alpha}{2}\|q-\tilde q\|_Q^2.
\end{align*}
This completes the proof.
\end{proof}

The energy norm error estimates for the state and the adjoint
state are proved in the following:

\begin{theorem}\label{N:thm:Err-u} There hold
\begin{align*}
\|\phi-\tilde \phi_h\|_h &\leq C\left( \|\phi-P_h\phi\|_h +
\|B_h(\phi-P_h\phi)\|_Q+\|u-P_hu\|_W\right),\\
\|u-u_h\|_h & \leq
C\left(\|u-P_hu\|_h+\|B_h(\phi-P_h\phi)\|_Q+\|u-P_hu\|_W\right).
\end{align*}
\end{theorem}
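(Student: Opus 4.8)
The plan is to prove the two energy-norm estimates in Theorem \ref{N:thm:Err-u} by mimicking the structure of the proof of Theorem \ref{thm:Err-u}, exploiting the error equations \eqref{N:eq:E11}--\eqref{N:eq:E12} together with the control/state estimate already established in Theorem \ref{N:thm:Err-q}. The key mechanism is the usual triangle-inequality splitting $\phi-\tilde\phi_h = (\phi-P_h\phi)+(P_h\phi-\tilde\phi_h)$, which reduces the task to controlling the purely discrete quantity $\|P_h\phi-\tilde\phi_h\|_h$, and analogously $\|P_hu-\tilde u_h\|_h$ for the state.

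First I would estimate $\|P_h\phi-\tilde\phi_h\|_h$. Testing \eqref{N:eq:E12} with $v_h=P_h\phi-\tilde\phi_h$ gives
\begin{align*}
a_h(P_h\phi-\tilde\phi_h,\,P_h\phi-\tilde\phi_h)=(u-\tilde u_h,\,P_h\phi-\tilde\phi_h)\leq C\,\|u-\tilde u_h\|_W\,\|P_h\phi-\tilde\phi_h\|_W.
\end{align*}
Applying the ellipticity of $a_h$ (Lemma \ref{lem:Coercivity} in the abstract sense) and the Poincar\'e-type assumption \eqref{eq:Poincare-type} to absorb $\|P_h\phi-\tilde\phi_h\|_W$ against $\|P_h\phi-\tilde\phi_h\|_h$, I obtain $\|P_h\phi-\tilde\phi_h\|_h\leq C\,\|u-\tilde u_h\|_W$. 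Since Theorem \ref{N:thm:Err-q} already bounds $\|u-\tilde u_h\|_W$ by $C(\|B_h(\phi-P_h\phi)\|_Q+\|u-P_hu\|_W)$, combining with the triangle inequality yields the first estimate for $\|\phi-\tilde\phi_h\|_h$ directly.

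For the state estimate I would split $\|u-\tilde u_h\|_h\leq \|u-P_hu\|_h+\|P_hu-\tilde u_h\|_h$ and control the discrete difference through \eqref{N:eq:E11}. Testing \eqref{N:eq:E11} with $v_h=P_hu-\tilde u_h$ and invoking the trace-type assumption \eqref{eq:trace-QV} gives
\begin{align*}
a_h(P_hu-\tilde u_h,\,P_hu-\tilde u_h)=\langle q-\tilde q,\,B_h(P_hu-\tilde u_h)\rangle\leq C\,\|q-\tilde q\|_Q\,\|P_hu-\tilde u_h\|_h,
\end{align*}
so that ellipticity of $a_h$ produces $\|P_hu-\tilde u_h\|_h\leq C\,\|q-\tilde q\|_Q$. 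The control error $\|q-\tilde q\|_Q$ is again supplied by Theorem \ref{N:thm:Err-q}, and assembling everything gives the second estimate.

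The one point requiring slight care—and the main obstacle, such as it is—lies in the state estimate: applying \eqref{eq:trace-QV} to $B_h(P_hu-\tilde u_h)$ implicitly treats $P_hu-\tilde u_h$ as a discrete function, which is legitimate since both $P_hu$ and $\tilde u_h$ lie in $V_h$, but one must verify the trace bound holds for differences of two discrete functions rather than the stated $v\in V$, $v_h\in V_h$ pairing. Here this is automatic because the inverse-trace inequality (Lemma \ref{lem:inverse}) bounds $\|B_h w_h\|_Q\le C\|w_h\|_h$ for any $w_h\in V_h$, so no mismatch arises. Everything else is a routine recombination of the ellipticity, the Poincar\'e-type bound, and Theorem \ref{N:thm:Err-q}, so no further difficulty is anticipated.
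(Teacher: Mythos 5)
Your proposal is correct and follows essentially the same route as the paper: triangle-inequality splitting through $P_h\phi$ and $P_hu$, testing the error equations \eqref{N:eq:E12} and \eqref{N:eq:E11} with the discrete differences, using ellipticity together with assumptions \eqref{eq:Poincare-type} and \eqref{eq:trace-QV}, and then invoking Theorem \ref{N:thm:Err-q}. Your aside on justifying the trace bound for the purely discrete difference $P_hu-\tilde u_h$ is a reasonable extra precaution (the paper handles the analogous point in Theorem \ref{thm:Err-u} by splitting through $u$) but does not change the argument.
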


\begin{proof}
The estimate in Theorem \ref{N:thm:Err-q}, the continuity of $a_h$
on $V_h$, the error equation \eqref{N:eq:E12}, the assumption
(P-T) and the triangle inequality imply
\begin{align*}
\|\phi-\tilde\phi_h\|_h&\leq \|\phi-P_h\phi\|_h+\|P_h\phi-\tilde \phi_h\|\\
&\leq \|\phi-P_h\phi\|_h+ C\|u-\tilde u_h\|_W\\
&\leq C\left( \|\phi-P_h\phi\|_h +
\|B_h(\phi-P_h\phi)\|_Q+\|u-P_hu\|_W\right).
\end{align*}
The error equation \eqref{N:eq:E11} and the assumption
\eqref{eq:trace-QV} imply
\begin{align*}
a_h(P_hu-\tilde u_h,P_hu-\tilde u_h)&=\langle q-\tilde q,B_h(P_hu-\tilde u_h)\rangle\\
&\leq C\|q-\tilde q\|_Q \,\|P_h u-\tilde u_h\|_h.
\end{align*}
The rest of the proof follows from Theorem \ref{N:thm:Err-q}.
\end{proof}

Similarly, we can find a posteriori error estimates in the same
lines. In this case the a posteriori error estimator does not
involve the term $\|(B_h\tilde \phi_h+\alpha \tilde
q)-\Pi_h(B_h\tilde \phi_h+\alpha \tilde q )\|_Q$.

\section{Conclusions}\label{sec:conlusions}
We have developed a framework for the error analysis of
discontinuous finite element methods for elliptic optimal control
problems with control constraints. The abstract analysis provides
best approximation results which will be useful in convergence of
adaptive methods and delivers a reliable and efficient a
posteriori error estimator. The results are applicable to a
variety of discontinuous Galerkin methods (including classical
nonconforming methods) applied to elliptic optimal control
problems (distributed and Neumann) with constraints on control.
Applications to $C^0$ interior penalty methods for optimal control
problems governed by the biharmonic equation with simply supported
boundary conditions are established.  Numerical experiments
illustrate the theoretical findings. Variational discretization
method is discussed in the context of discontinuous Galerkin
methods and corresponding error estimates are derived.

\par
\noindent {\bf Acknowledgements:} The first author would like to
acknowledge the support from NBHM-India, the second author would
like to acknowledge the support from DST fast track project and
all the authors would like to thank the UGC Center for Advanced
Study.

\end{document}